\numberwithin{equation}{section}
\newtheorem{thm}{Theorem}[section]
\newtheorem{lem}[thm]{Lemma}
\newtheorem{conj}[thm]{Conjecture}
\newtheorem{cor}[thm]{Corollary}
\newtheorem{problem}[thm]{Problem}
\Crefname{lem}{Lemma}{Lemmas}
\Crefname{thm}{Theorem}{Theorems}
\theoremstyle{definition}
\newtheorem{defn}[thm]{Definition}
\newtheorem{ex}[thm]{Example}
\newtheorem{rem}[thm]{Remark}
\Crefname{defn}{Definition}{Definitions}
\newtheorem*{thm*}{Theorem}
\DeclareMathOperator{\Tor}{Tor}
\DeclareMathOperator{\reg}{reg}
\DeclareMathOperator{\depth}{depth}
\DeclareMathOperator{\Img}{Im}
\DeclareMathOperator{\Inc}{Inc}
\DeclareMathOperator{\ind}{ind}
\DeclareMathOperator{\Ker}{Ker}
\DeclareMathOperator{\Lcm}{Lcm}
\DeclareMathOperator{\lcm}{lcm}
\DeclareMathOperator{\maxsupp}{Msupp}
\DeclareMathOperator{\pd}{pd}
\DeclareMathOperator{\supp}{supp}
\DeclareMathOperator{\Sym}{Sym}
\newcommand{\Z}{{\mathbb Z}}
\newcommand{\N}{{\mathbb N}}
\def\Icc{{\mathcal I}}
\newcommand\bsa{{\boldsymbol a}}
\newcommand\defas{\coloneqq}
\newcommand{\wti}{\widetilde}
\begin{document}
	
	\title{On regularity and projective dimension of invariant chains of monomial ideals}
	
	\author{Dinh Van Le}
	\address{Institut f\"ur Mathematik, Universit\"at Osnabr\"uck, 49069 Osnabr\"uck, Germany}
	\email{dlevan@uos.de}
	
	\author{Hop D. Nguyen}
	\address{Institute of Mathematics, Vietnam Academy of Science and Technology, 18 Hoang Quoc Viet, 10307 Hanoi, Vietnam}
	\email{ngdhop@gmail.com}
	
	\begin{abstract}
		Ideals in infinite-dimensional polynomial rings that are invariant under the action of the monoid of increasing functions have been extensively studied recently. Of particular interest is the asymptotic behavior of truncations of such an ideal in finite-dimensional polynomial subrings. It has been conjectured that the Castelnuovo--Mumford regularity and projective dimension are eventual linear functions along such truncations. In the present paper we provide evidence for these conjectures. We show that for monomial ideals the projective dimension is eventually linear, while the regularity is asymptotically linear.
	\end{abstract}
	
	\makeatletter
	\@namedef{subjclassname@2020}{%
		\textup{2020} Mathematics Subject Classification}
	\makeatother
	
	\keywords{Invariant ideal, monoid, polynomial ring, symmetric group}
	\subjclass[2020]{13A50, 13C15, 13D02, 13F20, 16P70, 16W22}
	
	\maketitle
	
	\section{Introduction}
	
	Ideals in infinite-dimensional polynomial rings that are invariant under the action of the infinite symmetric group $\Sym$ or the monoid of increasing functions $\Inc$ 
	have recently been intensively studied, with motivations from group theory \cite{Co67}, algebraic statistics \cite{HS12}, and representation theory \cite{CEF15}; see \cite{Dr14} for a nice survey. A fascinating research direction in this theory is to explore whether existing results related to ideals in Noetherian polynomial rings can be extended to the infinite-dimensional case. Examples of successful extensions include equivariant Hilbert's basis theorem \cite{AH07,Co67,Co87,HS12,NR19}, equivariant Hilbert--Serre theorem \cite{KLS,Na,NR17}, equivariant Buchberger algorithm \cite{Co87,HKL}, and equivariant Hochster's formula \cite{MR20}. See, e.g., also \cite{DEF,NS,SS16,SaSn17} for related results. It should be noted that these results have inspired further work in combinatorics \cite{LR20} and polyhedral geometry \cite{KLR,LR21}.
	
	One challenging open problem in the mentioned research direction is to extend the classical Hilbert's syzygy theorem to the equivariant setting. More specifically, let $R=K[x_i\mid i\in\N]$ be a polynomial ring in infinitely many variables over a field $K$ and let $I\subseteq R$ be a $\Pi$-invariant ideal (with $\Pi=\Sym$ or $\Pi=\Inc$) that is equivariantly finitely generated, i.e., $I$ is generated by finitely many $\Pi$-orbits. It is of great interest to understand the minimal free resolution of $I$ over $R$, and in particular, to understand how the finiteness property of $I$ is reflected in that resolution.
	
	An approach to this problem is as follows. One considers a 
	$\Pi$-invariant chain $\Icc=(I_n)_{n\ge 1}$ with $I_n$ an ideal in the Noetherian polynomial ring $R_n=K[x_1,\dots,x_n]$ such that $I=\bigcup_{n\ge 1}I_n$ (e.g., one can take the \emph{saturated} chain $\overline{\Icc}=(\bar I_n)_{n\ge 1}$ with $\bar I_n=I\cap R_n$ for $n\ge1$). The ideal $I$ may be regarded as a \emph{global} object and the chain $\Icc$ a \emph{local} one. By studying free resolutions of the local ideals $I_n$ one has an opportunity to understand a free resolution of the global ideal $I$. This approach is undertaken in \cite{NR19}, where the chain $\Icc$ is viewed as an FI- or OI-module and a projective resolution of this module is obtained with interesting finiteness properties (see \cite[Theorems 7.1 and 7.7]{NR19}). 
	
	Numerical aspects of free resolutions of the local ideals $I_n$, namely their Castelnuovo--Mumford regularity and projective dimension, are further studied in \cite{LNNR, LNNR2}, where the following expectations are proposed.

	\begin{conj}
		\label{conj}
		Let $\Pi=\Sym$ or $\Pi=\Inc$ and
		let $\Icc=(I_n)_{n\ge 1}$ be a $\Pi$-invariant chain of proper homogeneous ideals. Then the following hold.
		\begin{enumerate}
			\item 
			\label{conj_asymptotic_pdim}
			$\pd I_n$ is eventually a linear function in $n$.
			\item 
			\label{conj_asymptotic_regularity}
			$\reg I_n$ is eventually a linear function in $n$.
		\end{enumerate}
	\end{conj}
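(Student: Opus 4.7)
The plan is two-pronged: reduce the general homogeneous case to the monomial case via Gr\"obner bases, then tackle the monomial case through the combinatorial structure of $\Pi$-invariant chains.

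For the reduction, I would, for each $n$, consider the generic initial ideal $J_n \defas \gin(I_n)$ with respect to the reverse lexicographic order on $R_n$. By Bayer--Stillman, $\pd I_n = \pd J_n$ and $\reg I_n = \reg J_n$, so it suffices to establish linear asymptotics for $(J_n)_{n \geq 1}$. The crux is to show that $(J_n)$ forms a $\Pi$-invariant chain in an appropriate sense. A priori this is not clear because the generic coordinate change defining $\gin$ lies in the Borel subgroup of $\mathrm{GL}_n(K)$, which does not commute with the $\Pi$-action. One would like to exploit that for a $\Sym$-invariant ideal the gin is simultaneously Borel-fixed and $\Sym$-invariant, and to coordinate the choices of generic coordinates as $n$ varies so that the inclusion $J_n R_{n+1} \subseteq J_{n+1}$ becomes compatible with the $\Inc$-action up to orbit.

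Once reduced to a $\Pi$-invariant chain of monomial ideals, I would use the classification of such chains: each $I_n$ is, modulo the action of $\Inc_n$ or $\Sym_n$, generated by the orbits of finitely many monomials $u_1, \dots, u_r$ of uniformly bounded support. The projective dimension is accessible through the Taylor complex or Lyubeznik's formula, while the regularity can be studied via Alexander duality, reducing both to combinatorial invariants of an associated simplicial complex that depends in a controlled way on $n$. Mapping cones arising from the inclusions $R_n \hookrightarrow R_{n+1}$, combined with the fact that the passage from $I_n$ to $I_{n+1}$ only adds generators obtained from shifted copies of the $u_i$, should yield recursive inequalities that eventually become equalities, producing linear behavior in $n$.

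The main obstacle I anticipate is twofold. First, the reduction to monomial ideals requires coherent control over generic initial ideals across the chain, which is delicate because $\gin$ is computed via a generic change of coordinates that does not intertwine naturally with the symmetric action. Second, even within the monomial case the regularity seems substantially harder than the projective dimension, since the Betti table of $I_n$ may oscillate in its top strand before stabilizing, so upgrading asymptotic linearity to eventual linearity likely requires a finer stability statement, perhaps extracted from the equivariant resolution of the global ideal $I$ constructed in \cite[Theorems 7.1 and 7.7]{NR19}. If the $\gin$ reduction cannot be carried out in full generality, a natural fallback is to work directly with that equivariant resolution, pulling back uniform length and top-degree bounds onto the chain $(I_n)$ and refining them via a separate stability argument.
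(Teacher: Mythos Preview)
The statement you are attempting to prove is labelled \Cref{conj} in the paper precisely because it is an open conjecture; the paper does not prove it. What the paper establishes is only partial evidence (\Cref{thm_main}): for \emph{monomial} $\Inc$-invariant chains, $\pd I_n$ is eventually linear provided the chain is saturated, and $\reg I_n/n$ has a limit (not that $\reg I_n$ is eventually linear). So the target you have set yourself is strictly stronger than anything achieved in the paper, and your outline does not close the gap.

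Concretely, two steps in your plan are genuine obstacles rather than technicalities. First, the $\gin$ reduction: you correctly note that the generic coordinate change does not commute with the $\Pi$-action, but you then assume this can be fixed by ``coordinating the choices of generic coordinates as $n$ varies.'' No such mechanism is known; in fact there is no reason for $(\gin I_n)_{n\ge1}$ to be a $\Pi$-invariant chain, and this is exactly why the literature (including this paper) treats the monomial case as a separate, still-open problem rather than as the general case in disguise. Second, even granting the monomial reduction, your handling of that case is too vague to succeed where the paper stops short. The paper's proof of asymptotic linearity of regularity already uses a delicate induction on the $q$-invariant together with a new auxiliary chain construction (\Cref{defn_colon_filtration}) and the weight function $\lambda$, and still does not obtain eventual linearity except in the quasi-saturated $\lambda$-maximal case (\Cref{lem_saturated_ultimate}). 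Your proposed tools (Taylor complex, Lyubeznik, Alexander duality, ``recursive inequalities that eventually become equalities'') do not supply the missing lower bound that would upgrade $\lim \reg I_n/n = w(\Icc)-1$ to $\reg I_n = (w(\Icc)-1)n + D$ for $n\gg0$; indeed \Cref{ex_pd-decrease} shows that naive recursion can fail badly past the stability index for non-saturated chains. Likewise, for projective dimension the paper's argument (via \Cref{lem_betti_num_saturated2}) genuinely requires saturation, and removing that hypothesis is left open.
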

	
	In fact, this conjecture is stated in \cite{LNNR, LNNR2} for more general polynomial rings. See also \cite{Na} for a further generalization of the conjecture to the context of FI- and OI-modules.
	
	\Cref{conj} is verified for $\Sym$-invariant chains of monomial ideals by Murai \cite{Mu}. Raicu \cite{Ra} independently also proves \Cref{conj}(ii) for such chains. Recently, Murai's results have been refined in his joint work with Raicu on equivariant Hochster's formula \cite{MR20}.
	
	In the present paper we study \Cref{conj} in the case that 
	$\Icc$ is an $\Inc$-invariant chain of monomial ideals. Our main result reads as follows.
	
	\begin{thm}
		\label{thm_main}
		Let $\Icc=(I_n)_{n\ge 1}$ be a nonzero $\Inc$-invariant chain of proper monomial ideals. Then the following hold.
		\begin{enumerate}
			\item 
			$\pd I_n$ is eventually a linear function in $n$, provided that $\Icc$ is a saturated chain.\smallskip
			\item 
			$\lim \limits_{n\to\infty}\dfrac{\reg I_n}{n}$ exists.
		\end{enumerate}
	\end{thm}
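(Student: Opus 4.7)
The plan is to combine the combinatorial structure of $\Inc$-invariant chains of monomial ideals with Hochster-type formulas for Betti numbers. As a preliminary step, I would reduce to the squarefree case by polarization, which preserves regularity and projective dimension and, in a suitable setup, is compatible with a variant of the $\Inc$-action on an enlarged variable set; this opens up the Stanley--Reisner machinery. The Auslander--Buchsbaum identity $\pd(I_n)=n-\depth(R_n/I_n)-1$ rephrases (i) as a statement about $\depth(R_n/I_n)$.

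For (i), saturation forces $I_n=I\cap R_n$, and the equivariant Hilbert basis theorem produces a finite set $G$ of monomials whose $\Inc$-shifts generate $I$. For $n$ large, I would describe the Stanley--Reisner complex $\Delta_n$ of $I_n$ explicitly in terms of $G$ together with admissible ``gap patterns'' on $[n]$, and compute $\pd(R_n/I_n)$ via Hochster's formula
\[
\pd(R_n/I_n) \;=\; \max\bigl\{\,|W| - 1 - d \,:\, W\subseteq [n],\ \tilde H_d(\Delta_{n,W};K)\neq 0\bigr\}+1.
\]
Identifying which orbit in $G$ supplies the ``heaviest'' homological witness should show that the maximum is attained by subsets of size $cn+O(1)$, giving $\depth(R_n/I_n)=cn+O(1)$. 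A monotonicity argument---pushing witnesses forward along the canonical inclusion $[n]\hookrightarrow [n{+}1]$ forced by $\Inc$-invariance of the saturated chain---would then refine the $O(1)$ error to a genuine constant for $n$ large, yielding eventual linearity of $\pd(I_n)$.

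For (ii), the linear upper bound $\reg(I_n)\le An+B$ from \cite{LNNR,LNNR2} gives $\limsup \reg(I_n)/n\le A$. To pin down the limit, I would use Hochster's formula to produce, at each stage, reduced-homology witnesses computing $\reg(I_n)$, and then exploit $\Inc$-invariance to relate witnesses across different stages. The target is either a Fekete-type superadditivity
\[
\reg(I_{m+n}) \;\ge\; \reg(I_m)+\reg(I_n)-c,
\]
from which existence of $\lim \reg(I_n)/n$ follows, or a direct two-sided estimate $\reg(I_n)=\alpha n+O(1)$ for an invariant $\alpha$ determined by $G$. Combined with the known upper bound, either route yields convergence of $\reg(I_n)/n$.

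The main obstacle is (ii). The monoid $\Inc$ is much more rigid than $\Sym$: a homological witness at stage $m$ does not automatically transfer to every larger stage, and orbits of $\Inc$ can fail to saturate to $\Sym$-orbits. Producing witnesses at stage $m+n$ by gluing witnesses at stages $m$ and $n$ requires a careful construction inside $\Delta_{m+n}$ that is compatible with the gap-pattern constraints imposed by $\Inc$-invariance; this is precisely the step where Murai's $\Sym$-equivariant arguments do not apply verbatim and must be replaced by a more delicate combinatorial construction. A secondary difficulty is the polarization reduction: arranging that the polarized chain retains enough invariance to transport the squarefree conclusions back requires nontrivial bookkeeping, likely via a blockwise $\Inc$-action on the polarized variable set.
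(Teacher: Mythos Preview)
Your proposal is a plan rather than a proof, and it has genuine gaps in both parts.

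For (i), the polarization reduction is a red herring. Polarizing $I_n\subseteq R_n$ replaces each $x_i$ by a block of variables whose length depends on the exponents in $G(I_n)$; the resulting ring has a variable count that is not linear in $n$ in any uniform way, and the $\Inc$-action does not extend to the polarized rings in a manner that preserves the chain structure. This is not mere bookkeeping: no ``blockwise $\Inc$-action'' makes the polarized chain an $\Inc$-invariant chain in the sense of the paper. The paper avoids this entirely by working with the \emph{upper Koszul simplicial complexes} $\Delta^{I_n}_{\bsa}$ of the (non-squarefree) ideals directly. The key lemma (\Cref{lem_betti_num_saturated2}) compares $\Delta^{I_n}_{\bsa}$ with $\Delta^{I_{n+1}}_{(\bsa,p)}$ via an explicit cone/Mayer--Vietoris argument and shows that $\beta_{i,\bsa}(I_n)\neq 0$ forces $\beta_{i+1,(\bsa,p)}(I_{n+1})\neq 0$ for some $p$. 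This immediately gives $\pd I_{n+1}\ge \pd I_n+1$, so combined with Hilbert's syzygy theorem the slope is exactly $1$ and $\depth(R_n/I_n)$ is eventually \emph{constant}, not $cn+O(1)$ for some unknown $c$. Your Hochster-formula plan does not see this: you would have to identify the slope from the gap-pattern combinatorics, which is precisely the hard part.

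For (ii), the Fekete inequality $\reg I_{m+n}\ge \reg I_m+\reg I_n-c$ is not known and there is no evident mechanism to glue homological witnesses at stages $m$ and $n$ into one at stage $m+n$; the $\Inc$-shifts send $I_m$ and $I_n$ into $I_{m+n}$ but the images overlap on the same variable range and the resulting Koszul complexes do not split. The paper's route is entirely different: it proves the precise value $\lim_n \reg I_n/n = w(\Icc)-1$ by induction on the $q$-invariant of the chain. The inductive step replaces $\Icc$ by an auxiliary $\Inc$-invariant chain $\Icc_e$ built from colon ideals $(I_n:x_{n-r+p}^e)\cap R_{n-r+p-1}$, which has strictly smaller $q$-invariant unless $\Icc$ is quasi-saturated and $\lambda$-maximal; in that residual case the same Koszul-complex lemma from part (i) supplies $\reg I_{n+1}\ge \reg I_n + w(\Icc)-1$, and the upper bound from \cite{LNNR2} finishes. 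None of this machinery appears in your plan, and without it there is no argument for the lower bound on $\reg I_n$.
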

	
	The first part of this theorem generalizes Murai's result on $\Sym$-invariant chains \cite[Corollary 3.7]{Mu}, and our proof improves a key step in his argument (see \Cref{lem_betti_num_saturated2}).
	
	For the second part of \Cref{thm_main} we actually show that
	$\lim\limits_{n\to\infty}\left({\reg I_n}/{n}\right)$ has the expected value that is predicted by \cite[Conjecture 4.12]{LNNR} (see \Cref{thm_reg_asymp}). This is done by using an induction on the so-called 
	\emph{$q$-invariant} introduced in \cite{NR17}. There are two new ingredients in our proof. First, we introduce a certain chain of ideals to reduce the $q$-invariant (see \Cref{defn_colon_filtration}). This chain is somewhat similar to the one employed in \cite{LNNR,LNNR2,NR17}, but has an additional property that it is $\Inc$-invariant and thus suitable for our inductive argument. Second, we define a function $\lambda$ that records the minimal last exponent of a generator of a monomial ideal (see \Cref{sec_weight-functions}). This function can be extended to $\Inc$-invariant chains and will be exploited to give a lower bound on regularity along those chains. Such a bound is essential for a base case in the inductive argument (see \Cref{lem_saturated_ultimate}).
	
	Most positive results so far on \Cref{conj} deal with saturated chains. It is therefore worth mentioning that the constructions employed in the proof of \Cref{thm_main}(ii) are useful for studying non-saturated chains. Note that for those chains of ideals, the initial behavior of the functions $\reg I_n$ and $\pd I_n$ can be rather wild,  even after the point where the chains stabilize (see \Cref{ex_pd-decrease}).

	Let us now describe the structure of the paper. In the next section we set up notation and review relevant facts on invariant chains and monomial ideals. The proof of \Cref{thm_main}(i) is given in \Cref{sec_pd}. In \Cref{sec_reg} we prove that $\lim\limits_{n\to\infty}\left({\reg I_n}/{n}\right)$ exists and has the expected integral value. In addition, we show that \Cref{conj}(ii) is true for any $\Inc$-invariant chains of monomial ideals up to an appropriate saturation (see \Cref{cor_saturation}). Some open problems are discussed in \Cref{sec_problem}. Finally, the Appendix provides the proofs of two technical results in \Cref{sec_reg}.

	\section{Preliminaries}
	\label{sect_notation}
	
	In this section we provide necessary notions and basic facts concerning invariant chains and monomial ideals.
	
	\subsection{Invariant chains of ideals}
	Let $\N$ denote the set of positive integers. For $n\in\N$ let $\Sym(n)$ be the symmetric group on $[n]=\{1,\dots,n\}.$ It is convenient to identify each element of $\Sym(n)$ with a permutation of $\N$ that fixes every $k\ge n+1$. In this manner, $\Sym(n)$ is exactly the stabilizer subgroup of $n+1$ in $\Sym(n+1)$. Set 
	\[
	\Sym=\bigcup_{n\ge1}\Sym(n).
	\]
	Then $\Sym$ is the group of \emph{finite permutations} of $\N$, i.e. permutations that fix all but finitely many elements of $\N$. In the literature $\Sym$ is sometimes also denoted by $\Sym(\infty)$.
	
	The \emph{monoid of strictly increasing
		maps on $\N$} is defined as
	\[
	\Inc = \{ \pi \colon \N \to \N \mid  \pi(n)<\pi(n+1) \text{ for all } n\geq 1\}.
	\]
	For $m\le n$ set
	\[
	\Inc_{m,n} = \{\pi \in \Inc \mid \pi(m) \le n\}.
	\]
	
	Throughout let $R=K[x_i\mid i\in\N]$ be an infinite-dimensional polynomial ring over a field $K$. For $n\in\N$ the Noetherian polynomial ring $R_n=K[x_1,\dots,x_n]$  is regarded as a subring of $R$. Thus, one may view $R=\bigcup_{n\ge1}R_n$ as the limit of the chain of increasing subrings 
	$R_1\subset R_2\subset\cdots.$ 
	
	Let $\Pi=\Sym$ or $\Pi=\Inc$. Consider the action of $\Pi$ induced by
	\[
	\pi \cdot x_{i}=x_{\pi(i)}
	\quad\text{for every } \pi\in \Pi \text{ and } i\ge 1.
	\]
	In what follows we often write $\pi(f)$ instead of $\pi\cdot f$ whenever $\pi\in \Pi$ and $f\in R$.
	An ideal $I\subseteq R$ is \emph{$\Pi$-invariant} if
	\[
	\Pi(f)=\{\pi(f)\mid \pi\in \Pi\}\subseteq I
	\quad\text{for every } f\in I.
	\]
	A fundamental result of Cohen \cite{Co67,Co87} (see also \cite{AH07,HS12}) says that the ring $R$ is \emph{$\Pi$-equivariantly Noetherian}, in the sense that every $\Pi$-invariant ideal $I\subseteq R$ is generated by finitely many orbits, i.e. there exist $f_1,\dots,f_m\in R$ such that
	$
	I=\langle\Pi(f_1),\dots,\Pi(f_m)\rangle.
	$
	In this form, Cohen's result is also known as an equivariant Hilbert's basis theorem, and like the classical result of Hilbert,  it also has an equivalent version in terms of chains of ideals, stating that every $\Pi$-invariant chain of ideals stabilizes (see \cite{HS12} and also \cite{KLR}). Here, a chain of ideals 
	$\Icc=(I_n)_{n\ge 1}$ with $I_n\subseteq R_n$ for $n\ge 1$ is called \emph{$\Pi$-invariant} if
	\begin{equation}
		\label{eq-invariant}
		\langle\Pi_{m,n}(I_m)\rangle_{R_n}\subseteq I_n
		\quad\text{for all }\ n\ge m\ge 1,
	\end{equation}
	where $\langle\Pi_{m,n}(I_m)\rangle_{R_n}$ denotes the ideal in $R_n$ generated by $\Pi_{m,n}(I_m)$ with
	\[
	\Pi_{m,n}=
	\begin{cases}
		\Sym(n)&\text{if }\ \Pi=\Sym,\\
		\Inc_{m,n}&\text{if }\ \Pi=\Inc.
	\end{cases}
	\]
	We say that this chain \emph{stabilizes} if there exists $r\in\N$ such that the inclusion in \eqref{eq-invariant} becomes an equality for all $n\ge m \ge r$. The smallest number $r$ with this property is called the \emph{stability index} of $\Icc$ and denoted by $\ind(\Icc)$. 
	
	The two versions of Cohen's result indicate that each $\Pi$-invariant ideal $I\subseteq R$ (viewed as a global object) is intimately related to any $\Pi$-invariant chain of ideals $\Icc=(I_n)_{n\ge 1}$ (viewed as a local object) with limit $I$, i.e. $I=\bigcup_{n\ge 1}I_n$. This kind of local-global relations has been further explored in the context of polyhedral geometry in \cite{KLR,LR21}. 
	
	Among all $\Pi$-invariant chains with the same limit $I\subseteq R$, the \emph{saturated} chain $\overline{\Icc}=(\bar I_n)_{n\ge 1}$ defined by $\bar I_n=I\cap R_n$ for $n\ge1$ is evidently the largest one. It should be mentioned that any $\Sym$-invariant chain eventually coincides with a saturated one (see \cite[Corollary 6.1]{KLR}). However, this is not true for $\Inc$-invariant chains, and in fact, there are subtle differences between saturated chains and non-saturated ones (see, e.g. \Cref{ex_pd-decrease}). 
	
	Note that $\Sym$-invariant chains are always $\Inc$-invariant. This is because $\Inc_{m,n}(f)\subseteq\Sym(n)(f)$ for all $n\ge m$ and $f\in R_m.$ Even if one is primarily interested in $\Sym$-invariant chains, it is usually more convenient to work with the (strictly) larger class of $\Inc$-invariant chains. The reason is that this class is closed under taking initial ideals (see, e.g. \cite[Lemma 7.1]{NR17}), while the smaller class of $\Sym$-invariant chains is not (see, e.g. \cite[Example 2.2]{LNNR}).
	
	\subsection{Betti numbers of monomial ideals}
	
	Multigraded Betti numbers of monomial ideals can be computed via simplicial homology. We recall here this formula and a few related facts.
	Let $J$ be a monomial ideal in $R_n=K[x_1,\ldots,x_n]$ with minimal set of monomial generators $G(J)$. For 
	$\bsa=(a_1,\ldots,a_n)\in \Z^n_{\ge0}$
	let $x^\bsa=\prod_{i=1}^nx_i^{a_i}$. The \emph{support} of $x^\bsa$ is denoted by
	\[
	\supp(x^\bsa)=\{i\in[n]\mid a_i>0\}.
	\]
	Let $\maxsupp(x^\bsa)=\max\{i\mid i\in \supp(x^\bsa)\}$ be the maximal index in $\supp(x^\bsa)$.
	We define
	\[
	\maxsupp(J)=\max\{\maxsupp(u)\mid u\in G(J)\}.
	\]
	
	Endow $R_n$ with the standard $\Z^n$-grading.  Then $J$ admits a minimal $\Z^n$-graded free resolution over $R_n$.  For 
	$i\ge 0$ and $\bsa\in \Z^n_{\ge0}$ 
	let $\beta_{i,\bsa}(J)=\dim_K \Tor_i(J,K)_\bsa$ denote the $i$th \emph{Betti number} of $J$ in degree $\bsa$. Moreover, let $x^F=\prod_{j\in F}x_j$ for any $F\subseteq[n]$.  The \emph{upper Koszul simplicial complex} of $J$ in degree $\bsa$ is defined as
	\[
	\Delta^J_\bsa=\left\{F\subseteq [n]\mid \frac{x^{\bsa}}{x^F} \in J \right\}.
	\]
	Let $\Lcm(J)$ be the \emph{lcm lattice}  of $J$, i.e. the lattice of all least common multiple of monomials in $G(J)$. The following result is well-known (see, e.g. \cite[Theorem 1.34]{MS} and \cite[Theorem 58.8]{Pe}).
	
	\begin{lem}
		\label{lem_multigradedBetti_num}
		Let $J\subseteq R_n$ be a monomial ideal. Then for all $i\ge 0$ and $\bsa\in \Z^n_{\ge0}$ it holds that 
		\[
		\beta_{i,\bsa}(J) 
		= \dim_K \widetilde{H}_{i-1}(\Delta^J_\bsa),
		\]
		where $\widetilde{H}_{i-1}(\Delta^J_\bsa)$ denotes the $(i-1)$st reduced homology group of $\Delta^J_\bsa$ with coefficients in $K$.
		Moreover, $\beta_{i,\bsa}(J) =0$ if $x^\bsa\notin \Lcm(J)$.
	\end{lem}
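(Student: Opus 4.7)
The plan is to compute the multigraded Betti numbers of $J$ via the Koszul complex on the variables of $R_n$. Writing $K_\bullet=K_\bullet(x_1,\dots,x_n;R_n)$ for this complex, which is a $\Z^n$-graded minimal free resolution of the residue field $K$, we have
\[
\beta_{i,\bsa}(J) \;=\; \dim_K \Tor_i^{R_n}(J,K)_\bsa \;=\; \dim_K H_i\bigl(J\otimes_{R_n} K_\bullet\bigr)_\bsa,
\]
so the proof reduces to identifying the degree-$\bsa$ strand of $J\otimes K_\bullet$ combinatorially.

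The module $K_i$ admits a $\Z^n$-graded basis consisting of elements $e_\sigma$ indexed by subsets $\sigma\subseteq[n]$ of size $i$, with $\deg(e_\sigma)=\sum_{j\in\sigma}\bse_j$. In multidegree $\bsa$, a basis of $(J\otimes K_i)_\bsa$ is therefore given by the tensors $(x^\bsa/x^\sigma)\otimes e_\sigma$ subject to $\sigma\subseteq\supp(x^\bsa)$ and $x^\bsa/x^\sigma\in J$ --- which is precisely the condition $\sigma\in\Delta^J_\bsa$ with $|\sigma|=i$. A short verification shows that, under this identification, the Koszul differential restricts (up to sign) to the usual simplicial boundary map of $\Delta^J_\bsa$. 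Hence $(J\otimes K_\bullet)_\bsa$ is the augmented simplicial chain complex of $\Delta^J_\bsa$ with faces of cardinality $i$ placed in homological degree $i$, and passing to homology yields the formula $\beta_{i,\bsa}(J)=\dim_K \wti{H}_{i-1}(\Delta^J_\bsa)$.

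For the second assertion, suppose $x^\bsa\notin\Lcm(J)$ and set $S=\{u\in G(J):u\mid x^\bsa\}$. If $S=\emptyset$, then $x^\bsa/x^F\notin J$ for every $F\subseteq[n]$, so $\Delta^J_\bsa$ is the void complex (no face at all) and has vanishing reduced homology. Otherwise put $x^\bsb=\lcm(S)\in\Lcm(J)$; since $x^\bsa\notin\Lcm(J)$ and $\bsb\le\bsa$, some coordinate $j$ satisfies $b_j<a_j$. For any $F\in\Delta^J_\bsa$, pick $u\in G(J)$ dividing $x^\bsa/x^F$; then $u\mid x^\bsa$ forces $u\in S$, so $u_j\le b_j<a_j$, and hence $u$ divides $x^\bsa/x^{F\cup\{j\}}$ as well, showing $F\cup\{j\}\in\Delta^J_\bsa$. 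Therefore $\Delta^J_\bsa$ is a cone with apex $j$, so it is contractible and its reduced homology vanishes.

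The entire argument is classical and both references cited in the statement carry it out in full, so I expect no serious obstacle. The only point that requires mild care is the book-keeping around the reduced homology of the void complex versus that of the irrelevant complex $\{\emptyset\}$; this distinction becomes relevant in degenerate cases such as $\bsa=0$ with $J$ proper, but it causes no trouble once the conventions are fixed.
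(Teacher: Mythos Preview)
Your argument is correct and is precisely the standard proof: compute $\Tor$ via the Koszul complex, identify the degree-$\bsa$ strand with the augmented chain complex of $\Delta^J_\bsa$, and for the vanishing statement observe that $\Delta^J_\bsa$ is either void or a cone when $x^\bsa\notin\Lcm(J)$. The paper itself does not prove this lemma; it simply records it as well known and cites \cite[Theorem 1.34]{MS} and \cite[Theorem 58.8]{Pe}, where exactly the Koszul-complex computation you outline is carried out. So there is nothing to compare: you have supplied the proof that the paper delegates to the references.
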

	
	The functorial property of reduced homology implies the following (see \cite[Lemma 2.5]{Mu}).
	
	\begin{lem}
		\label{lem_induced_zeromap}
		Let $\Delta \subseteq \Sigma\subseteq \Gamma$ be simplicial complexes. If $\wti{H}_i(\Sigma)=0$ for some integer $i$, then the natural map 
		$\wti{H}_i(\Delta) \to \wti{H}_i(\Gamma)$ 
		induced by the inclusion $\Delta \subseteq \Gamma$ is zero.
	\end{lem}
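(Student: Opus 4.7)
The plan is to invoke the functoriality of reduced simplicial homology. The inclusion $\Delta\subseteq\Gamma$ admits the obvious factorization
\[
 \Delta\hookrightarrow \Sigma \hookrightarrow \Gamma,
\]
where both arrows are inclusions of subcomplexes. By functoriality, applying $\wti{H}_i(-)$ yields a factorization of the induced map on reduced homology through $\wti{H}_i(\Sigma)$:
\[
 \wti{H}_i(\Delta)\longrightarrow \wti{H}_i(\Sigma) \longrightarrow \wti{H}_i(\Gamma).
\]

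Since the hypothesis gives $\wti{H}_i(\Sigma)=0$, the middle term of this composition vanishes, and therefore the composite map $\wti{H}_i(\Delta)\to\wti{H}_i(\Gamma)$ is zero, which is exactly the claim. No genuine obstacle arises here: the result is a formal consequence of functoriality, and there is no need to trace through the actual simplicial chain complexes or cycles, since the vanishing of the intermediate homology group immediately kills the composite at the level of $K$-vector spaces. The argument is valid with any coefficient ring; in particular it works for the field $K$ used throughout the paper.
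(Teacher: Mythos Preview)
Your proof is correct and is exactly the argument the paper has in mind: the paper does not spell out a proof but simply notes that the lemma follows from functoriality of reduced homology (citing \cite[Lemma 2.5]{Mu}), which is precisely your factorization of $\wti{H}_i(\Delta)\to\wti{H}_i(\Gamma)$ through the vanishing group $\wti{H}_i(\Sigma)$.
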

	
	Recall that the \emph{projective dimension} $\pd J$ and the \emph{(Castelnuovo-Mumford) regularity} $\reg J$ of $J$ measure the width and the height of the Betti table of $J$, respectively. More precisely,
	\begin{align*}
		\pd J&=\max\{i\mid \beta_{i,\bsa}(J)\ne 0\text{ for some } \bsa\in \Z^n_{\ge0}\},\\
		\reg J&=\max\{|\bsa|-i\mid \beta_{i,\bsa}(J)\ne 0\},
	\end{align*}
	where $|\bsa|=\sum_{j=1}^na_j$ if $\bsa=(a_1,\ldots,a_n).$
	
	We will need the following formula for regularity. It is a consequence of \cite[Theorem 4.7]{CH+}; see \cite[Corollary 5.2]{LNNR2} for a proof.
	\begin{lem}
		\label{lem_reg_modulo_variable}
		Let $J\subseteq R_n$ be a nonzero monomial ideal. Consider a variable $x_k$ of $R_n$. Let $d\ge 0$ be an integer such that $J:x_k^d=J:x_k^{d+1}$. Then
		\[
		\max \{\reg \langle J:x_k^e,x_k\rangle\mid0\le e \le d\} 
		\le \reg J\in 
		\{\reg \langle J:x_k^e,x_k\rangle+e\mid0\le e \le d \}. 
		\]
	\end{lem}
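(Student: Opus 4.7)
The plan is to iterate the standard short exact sequence of colon type along the chain of iterated colons $J_e := J : x_k^e$ for $e = 0, 1, \ldots, d$. Since $J_e : x_k = J_{e+1}$, for each $e$ one has
\[
0 \longrightarrow (R_n/J_{e+1})(-1) \xrightarrow{\,\cdot x_k\,} R_n/J_e \longrightarrow R_n/\langle J_e, x_k\rangle \longrightarrow 0.
\]
The hypothesis $J_d = J_{d+1}$ says that $x_k$ is a non-zerodivisor on $R_n/J_d$, whence the last sequence (for $e=d$) gives $\reg(R_n/J_d) = \reg(R_n/\langle J_d, x_k\rangle)$.

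For the upper bound $\reg\langle J_e, x_k\rangle \le \reg J$, I would invoke the well-known monomial fact that $\reg(J:x_k) \le \reg J$; iterating this gives $\reg J_e \le \reg J$ for all $e$. Combined with the estimate $\reg(R_n/\langle J_e, x_k\rangle) \le \max\{\reg(R_n/J_e),\reg(R_n/J_{e+1})\}$ extracted from the above sequence (noting that the shift $(-1)$ on the left contributes $+1$ to its regularity, which is compensated by the $-1$ in the standard estimate for $\reg C$), this yields $\reg\langle J_e, x_k\rangle \le \reg J$ for all $0\le e\le d$, after translating between quotient and ideal regularity via $\reg I = \reg(R_n/I)+1$.

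For the membership part I would invoke \cite[Theorem~4.7]{CH+}, whose key output is that in a short exact sequence $0 \to A \to B \to C \to 0$ of finitely generated graded modules, $\reg B$ is actually attained (not merely bounded) by one of the neighbouring regularities, under a non-cancellation condition that holds here because of the monomial structure. Iterating along the above chain of short exact sequences, $\reg(R_n/J) = \reg(R_n/J_0)$ is replaced step by step either by $\reg(R_n/\langle J_e, x_k\rangle)+e$ (and the process terminates) or by $\reg(R_n/J_{e+1})+(e+1)$ (and continues). After at most $d$ steps the recursion either has terminated or reaches $\reg(R_n/J_d)+d$, which by the non-zerodivisor observation equals $\reg(R_n/\langle J_d, x_k\rangle)+d$. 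This gives $\reg(R_n/J) \in \{\reg(R_n/\langle J_e, x_k\rangle)+e : 0\le e \le d\}$, and the uniform additive shift of $1$ between quotient and ideal regularity transfers the statement to ideals.

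The main obstacle is the ``in'' refinement: a short exact sequence only gives $\reg B \le \max\{\reg A,\reg C\}$ in general, and strict cancellation at the critical cohomological degree must be ruled out at each iteration. This is precisely the content of \cite[Theorem~4.7]{CH+}. A fallback, if one wished to avoid that result, would be to work directly with the long exact sequence of local cohomology in the cohomological degree attaining $\reg(R_n/J_e)$ and to track the extremal non-vanishing component through the sequence to locate which neighbour carries that degree.
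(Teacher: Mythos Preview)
The paper does not give its own proof of this lemma; it simply records it as a consequence of \cite[Theorem~4.7]{CH+} and refers to \cite[Corollary~5.2]{LNNR2} for the details. Your proposal is correct and takes essentially the same route: iterate the colon short exact sequence along $J_0,\dots,J_d$, invoke \cite[Theorem~4.7]{CH+} at each step to get $\reg J_e\in\{\reg J_{e+1}+1,\ \reg\langle J_e,x_k\rangle\}$, and terminate at $e=d$ using that $x_k$ is regular on $R_n/J_d$. One small caveat: your paraphrase of \cite[Theorem~4.7]{CH+} as a statement about arbitrary short exact sequences with a ``non-cancellation condition'' is looser than what that theorem actually says (it is specific to the colon-by-variable situation for monomial ideals), but the way you apply it is exactly the intended one.
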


	\subsection{Weight functions} 
	\label{sec_weight-functions}
	
	We introduce two invariants of monomial ideals that play an important role in this paper. 
	Let $J\subsetneq R_n$ be a nonzero monomial ideal with minimal set of monomial generators $G(J)$. For  
	$\bsa\in \Z^n_{\ge0}$ set $\lambda(x^\bsa)=a_k$ with $k=\maxsupp(x^\bsa)$ and 
	$w(x^\bsa)=\max\{a_i\mid i\in\supp(x^\bsa)\}$.
	Define
	\begin{align*}
		\lambda(J)&=\min\{\lambda(u)\mid u\in G(J)\},\\
		w(J)&=\min\{w(u)\mid u\in G(J)\}.
	\end{align*}
	Evidently, $\lambda(u)\le w(u)$ for all $u\in G(J)$, hence 
	$\lambda(J)\le w(J)$.
	
	\begin{ex}
		If 
		$J=\langle x_1^4x_2,\, x_1^3x_3^2\rangle\subseteq R_3$, then
		\[
		\lambda(J)=\lambda(x_1^4x_2)=1,\ \
		w(J)=w(x_1^3x_3^2)=3.
		\]
	\end{ex}

	The function $w$ is introduced in \cite{LNNR2} (in a more general context) as a measure of the growth of regularity (see \Cref{conj_reg}). The function $\lambda$, on the other hand, will be used in the next section to estimate the non-vanishing degrees of Betti numbers, resulting in lower bounds for regularity and projective dimension. For our purposes, it it convenient to extend these functions to invariant chains of monomial ideals. To this end, we need the following.
	
	\begin{lem}
		\label{lem_lambda}
		Let $\Icc=(I_n)_{n\ge 1}$ be a nonzero $\Inc$-invariant chain of  proper monomial ideals with $r=\ind(\Icc)$. Then 
		$$
		\lambda(I_n)\le \lambda(I_{n+1})\le w(I_{n+1})=w(I_r) 
		\quad \text{for all }\ n\ge r.
		$$
	\end{lem}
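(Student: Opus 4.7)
The middle inequality $\lambda(I_{n+1})\le w(I_{n+1})$ is immediate from the general observation already noted, that $\lambda(u)\le w(u)$ on every monomial $u$, so I would dispose of it in one sentence. The two remaining relations are the substantive content. My plan is to exploit how $\Inc$ acts on monomials: if $u=x^\bsa$ and $\pi\in\Inc$, then strict monotonicity of $\pi$ gives $\maxsupp(\pi(u))=\pi(\maxsupp(u))$, so the exponent at the ``last'' position is unchanged; this yields $\lambda(\pi(u))=\lambda(u)$, and the same observation applied to the bijection $\pi\colon\supp(u)\to\supp(\pi(u))$ gives $w(\pi(u))=w(u)$. Thus $\lambda$ and $w$ are invariants of $\Inc$-orbits of monomials.

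For $n\ge r=\ind(\Icc)$ I would next use stability in the form $I_{n+1}=\langle \Inc_{n,n+1}(I_n)\rangle_{R_{n+1}}$ to show that every minimal monomial generator $u'$ of $I_{n+1}$ has the form $u'=\pi(u)$ for some $u\in G(I_n)$ and $\pi\in\Inc_{n,n+1}$. Indeed, $u'$ must be divisible by some element of the monomial generating set $\{\pi(u)\mid u\in G(I_n),\,\pi\in\Inc_{n,n+1}\}$; since that element already lies in $I_{n+1}$ and $u'$ is a minimal generator, the two coincide. Combining this with the $\Inc$-invariance of $\lambda$ and $w$ established above, every $u'\in G(I_{n+1})$ satisfies $\lambda(u')\ge\lambda(I_n)$ and $w(u')\ge w(I_n)$, which gives both $\lambda(I_n)\le\lambda(I_{n+1})$ and $w(I_n)\le w(I_{n+1})$.

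To complete the equality $w(I_{n+1})=w(I_r)$ it remains to show the reverse inequality $w(I_{n+1})\le w(I_n)$; iterating from $n=r$ then delivers the claim. For this step I would pick $v\in G(I_n)$ attaining $w(v)=w(I_n)$; since $v\in I_n\subseteq I_{n+1}$, some $u'\in G(I_{n+1})$ divides $v$, and coordinatewise comparison of exponent vectors together with $\supp(u')\subseteq\supp(v)$ yields $w(u')\le w(v)$, hence $w(I_{n+1})\le w(I_n)$. The main subtlety, and the reason the theorem asserts only one-sided monotonicity for $\lambda$, is that the analogous divisibility argument fails for $\lambda$: if $u'\mid v$ then $\maxsupp(u')$ can be strictly smaller than $\maxsupp(v)$ (the divisor may drop the last variable altogether), so the last exponents of $u'$ and $v$ are a priori incomparable. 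This is the only point in the argument that requires any real care; everything else reduces to the two elementary facts that $\lambda,w$ are $\Inc$-invariant on monomials and that stability pins down $G(I_{n+1})$ inside the $\Inc_{n,n+1}$-orbit of $G(I_n)$.
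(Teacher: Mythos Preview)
Your proof is correct and follows essentially the same approach as the paper: both arguments hinge on the fact that every $u'\in G(I_{n+1})$ is of the form $\pi(u)$ for some $u\in G(I_n)$ and $\pi\in\Inc_{n,n+1}$ (using stability), together with $\lambda(\pi(u))=\lambda(u)$ and $w(\pi(u))=w(u)$, and then a divisibility argument to get $w(I_{n+1})\le w(I_r)$. The only cosmetic difference is that you establish $w(I_{n+1})=w(I_n)$ and iterate, whereas the paper compares $I_{n+1}$ directly with $I_r$; your added remark on why the reverse inequality fails for $\lambda$ is a nice touch but not in the paper.
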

	
	\begin{proof}
		Assume $n\ge r$ and let $u\in G(I_{n+1})$ with 
		$\lambda(u)=\lambda(I_{n+1})$. Then there exist $v\in G(I_n)$ and $\pi\in\Inc_{n,n+1}$ such that 
		$u=\pi(v)$. Since $\lambda(u)=\lambda(\pi(v))=\lambda(v)$, one gets
		\[
		\lambda(I_n)\le \lambda(v) = \lambda(u)=\lambda(I_{n+1}).
		\]
		As noted above, the inequality 
		$\lambda(I_{n+1})\le w(I_{n+1})$ 
		follows at once from definition. The last equality is essentially shown in \cite[Lemma 3.5]{LNNR2}, but we include its proof here for convenience of the reader. Arguing similarly as for the function $\lambda$ one obtains 
		$w(I_r)\le w(I_{n+1})$. Now take $u'\in G(I_{r})$ with 
		$w(u')=w(I_r)$. Since $u'\in I_r\subseteq I_{n+1}$, there exists $v'\in G(I_{n+1})$ that divides $u'$. It follows that
		\[
		w(I_{n+1})\le w(v')\le w(u')=w(I_{r}).
		\qedhere
		\]
	\end{proof}
	
	The preceding lemma implies that the sequence 
	$(w(I_n))_{n\ge r}$ is constant, while the sequence 
	$(\lambda(I_n))_{n\ge r}$ is eventually constant. This permits the following definition.
	
	\begin{defn}
		Let $\Icc=(I_n)_{n\ge 1}$ be a nonzero $\Inc$-invariant chain of proper monomial ideals with $r=\ind(\Icc)$. Define
		\begin{align*}
			\lambda(\Icc)&=\max\{\lambda(I_n)\mid n\ge r\},\\
			w(\Icc)&=w(I_r).
		\end{align*}
	\end{defn}
	
	By \Cref{lem_lambda} it is clear that 
	$\lambda(\Icc)\le w(\Icc).$ The next result simplifies the computation of $\lambda(\Icc)$ in the case of saturated chains.
	
	\begin{lem}
		\label{lem_lambda_saturated}
		Let $\Icc=(I_n)_{n\ge 1}$ be a nonzero $\Inc$-invariant chain of proper monomial ideals with $r=\ind(\Icc)$. If $\Icc$ is saturated, then 
		\[
		\lambda(\Icc)=\lambda(I_n)
		\quad \text{for all }\ n\ge r.
		\]
	\end{lem}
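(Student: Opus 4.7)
The plan is to combine \Cref{lem_lambda} (which gives $\lambda(I_n)\le\lambda(I_{n+1})$ for $n\ge r$) with a matching reverse inequality that holds for saturated chains. Once both directions are established, the sequence $(\lambda(I_n))_{n\ge r}$ becomes constant, and the conclusion follows from the definition $\lambda(\Icc)=\max\{\lambda(I_n)\mid n\ge r\}$. In particular, no stability index hypothesis is needed for the reverse direction — saturation alone will force $\lambda(I_{n+1})\le\lambda(I_n)$ for every $n\ge 1$.

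The key step I would carry out is to show the inclusion
\[
 G(I_n)\subseteq G(I_{n+1})\quad\text{for every }n\ge 1
\]
whenever $\Icc$ is saturated. To see this, take $v\in G(I_n)$. Then $v\in I_n=I\cap R_n\subseteq I\cap R_{n+1}=I_{n+1}$. If $v$ were not a minimal generator of $I_{n+1}$, there would exist $w\in G(I_{n+1})$ with $w\ne v$ and $w\mid v$. From $w\mid v$ we obtain $\maxsupp(w)\le\maxsupp(v)\le n$, so $w\in R_n$, and hence
\[
 w\in I_{n+1}\cap R_n=(I\cap R_{n+1})\cap R_n=I\cap R_n=I_n,
\]
contradicting the minimality of $v$ in $I_n$. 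Thus $v\in G(I_{n+1})$, proving the claim.

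With the inclusion $G(I_n)\subseteq G(I_{n+1})$ in hand, the minimum in the definition of $\lambda(I_{n+1})$ is taken over a set that contains $G(I_n)$, so
\[
 \lambda(I_{n+1})=\min\{\lambda(u)\mid u\in G(I_{n+1})\}
 \le \min\{\lambda(v)\mid v\in G(I_n)\}=\lambda(I_n).
\]
Together with \Cref{lem_lambda} this yields $\lambda(I_n)=\lambda(I_{n+1})$ for all $n\ge r$, so the sequence is constant for $n\ge r$ and equals $\lambda(\Icc)$ by definition.

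There is no genuine obstacle here; the only subtlety worth flagging is to confirm that the intersection $I_{n+1}\cap R_n$ really collapses to $I_n$ for a saturated chain — this is immediate from $I_n=I\cap R_n$ and $I_{n+1}=I\cap R_{n+1}$, and it is precisely where the saturation hypothesis is used. Without saturation, a minimal generator of $I_n$ could cease to be minimal in $I_{n+1}$, which is exactly the source of the discrepancy between saturated and non-saturated chains alluded to earlier in the paper.
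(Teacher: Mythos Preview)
Your proof is correct and follows essentially the same approach as the paper: both establish the inclusion of minimal generating sets (the paper shows $G(I_r)\subseteq G(I_n)$ in one step using stability, you show $G(I_n)\subseteq G(I_{n+1})$ incrementally) by observing that any proper divisor of a minimal generator must lie in the smaller ring and hence, by saturation, in the smaller ideal. Your version is arguably a touch cleaner in that it uses only the identity $I_{n+1}\cap R_n=I_n$ and never invokes the $\Inc$-action, whereas the paper routes through $I_n=\langle\Inc_{r,n}(I_r)\rangle$ before appealing to saturation.
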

	
	\begin{proof}
		By \Cref{lem_lambda}, it suffices to show that $G(I_r)\subseteq G(I_n)$ for all $n\ge r$. Let $u\in G(I_r)$. If $u\not\in G(I_n)$, there would exist $v\in I_r$ and $\pi\in\Inc_{r,n}$ such that $\pi(v)$ is a proper divisor of $u$. Since $u\in R_r$, this gives $\pi(v)\in R_r$. Hence, $\pi(v)\in I_n\cap R_r=I_r$ because $\Icc$ is saturated. But this contradicts the fact that $u\in G(I_r)$.
	\end{proof}
	
	Note that this lemma is not true without the assumption that $\Icc$ is saturated.
	
	\begin{ex}
		\label{ex_lambda}
		Let $\Icc=(I_n)_{n\ge 1}$ be an $\Inc$-invariant chain with $I_3=\langle x_1^2,\; x_2^2x_3,\; x_3^2\rangle$ and 
		$\ind(\Icc)=3.$
		Then $I_n=\langle x_i^2\mid 1\le i\le n\rangle$ for $n\ge 4$. Thus,
		$
		\lambda(\Icc)=2>1=\lambda(I_3).
		$
	\end{ex}

	We conclude this section with a simple criterion for eventual linearity of numerical functions.
	\begin{lem}
		\label{lem_sublinearity}
		Let $f:\Z_{\ge0}\rightarrow\Z_{\ge0}$ be a function and let 
		$s\in \Z_{\ge0}$. Then there exists $t\in \Z$ such that 
		$f(n)=sn+t$ for $n\gg0$ if one of the following conditions is satisfied:
		\begin{enumerate}
			\item 
			there exists $r\in \Z$ such that for all $n\gg0$, it holds that $f(n+1)\le f(n)+s$  and $f(n)\ge sn+r$;
			\item
			there exists $r\in \Z$ such that for all $n\gg0$,  it holds that $f(n+1)\ge f(n)+s$ and $f(n)\le sn+r$.
		\end{enumerate}
	\end{lem}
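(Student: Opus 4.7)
The plan is to reduce both cases to the elementary fact that a monotone sequence of integers which is bounded in the appropriate direction is eventually constant. To this end I would introduce the auxiliary function
\[
g(n) \defas f(n) - sn,
\]
which takes integer values since $s \in \Z_{\ge 0}$ and $f$ takes values in $\Z_{\ge 0}$.

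In case (i), the hypothesis $f(n+1) \le f(n) + s$ translates directly into $g(n+1) \le g(n)$ for $n \gg 0$, so $g$ is eventually non-increasing. The hypothesis $f(n) \ge sn + r$ gives $g(n) \ge r$, so $g$ is bounded below by the integer $r$. A non-increasing sequence of integers that is bounded below must stabilize, hence there is an integer $t$ and an index $N$ such that $g(n) = t$, i.e., $f(n) = sn + t$, for all $n \ge N$. Case (ii) is entirely symmetric: the inequalities flip to give $g(n+1) \ge g(n)$ and $g(n) \le r$, so $g$ is a non-decreasing sequence of integers bounded above, which again stabilizes to some integer constant $t$.

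Since both arguments amount to a one-line monotonicity-plus-boundedness observation after the substitution $g(n)=f(n)-sn$, there is really no serious obstacle here; the only thing to be careful about is to state explicitly that the eventual monotonicity of $g$ starts from the same threshold beyond which the hypotheses on $f$ hold, and to invoke the integrality of $g$ to pass from ``eventually monotone and bounded'' to ``eventually constant'' (this step would fail for real-valued $g$). The resulting value of $t$ is $\lim_{n\to\infty} g(n)$, which lies in $\Z$ and satisfies $t \ge r$ in case (i) and $t \le r$ in case (ii).
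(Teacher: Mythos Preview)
Your proof is correct and matches the paper's own argument essentially verbatim: the paper also sets $t_n = f(n) - sn$, notes that in case (i) one has $r \le t_{n+1} \le t_n$ for $n \gg 0$, and concludes that this integer sequence is eventually constant (case (ii) being symmetric).
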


	\begin{proof}
		We only prove (i), since the proof of (ii) is similar. Setting $t_n=f(n)-sn$ one deduces from (i) that $r\le t_{n+1}\le t_n$ for $n\gg0$. So the sequence $(t_n)_{n\ge 1}$ must be eventually constant.
	\end{proof}

	
	\section{Projective dimension}
	\label{sec_pd}
	
	The main goal of this section is to verify \Cref{conj}(i) for saturated $\Inc$-invariant chains of monomial ideals.
	
	\begin{thm}
		\label{thm_projdim}
		Let $\Icc=(I_n)_{n\ge 1}$ be a nonzero saturated $\Inc$-invariant chain of proper monomial ideals. Then there exists a constant $d\ge1$ such that  $\pd I_n=n-d$ for all $n\gg 0$.
	\end{thm}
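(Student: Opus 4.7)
The plan is to combine \Cref{lem_sublinearity}(ii) with two ingredients: a growth inequality $\pd I_{n+1}\ge \pd I_n+1$ for $n\gg 0$ and the absolute upper bound $\pd I_n\le n-1$. The latter is immediate, since $I_n$ being proper and nonzero makes $R_n/I_n$ a nonzero cyclic module, so $\depth(R_n/I_n)\ge 0$, and by Auslander--Buchsbaum $\pd_{R_n}(R_n/I_n)\le n$, whence $\pd I_n=\pd(R_n/I_n)-1\le n-1$. Applying \Cref{lem_sublinearity}(ii) with $s=1$ and $r=-1$ to $f(n)=\pd I_n$ will then produce an integer $t\le -1$ with $\pd I_n=n+t$ for $n\gg 0$, and setting $d=-t\ge 1$ finishes the theorem.

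The growth inequality is where all the work goes. Via \Cref{lem_multigradedBetti_num}, it suffices to show: if $\beta_{i,\bsa}(I_n)\ne 0$ for some $i$ and some $\bsa\in\Z^n_{\ge 0}$, then there exists $\bsa'\in\Z^{n+1}_{\ge 0}$ with $\beta_{i+1,\bsa'}(I_{n+1})\ne 0$; applied to $i=\pd I_n$ this gives exactly $\pd I_{n+1}\ge \pd I_n+1$. The natural candidate is $\bsa'=(a_1,\dots,a_n,a_{n+1})$ for a suitable choice of $a_{n+1}\ge 1$, and the task is to promote a nontrivial class in $\widetilde{H}_{i-1}(\Delta^{I_n}_\bsa)$ to a nontrivial class in $\widetilde{H}_i(\Delta^{I_{n+1}}_{\bsa'})$. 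The saturation hypothesis enters here in an essential way: the equality $I_{n+1}\cap R_n=I_n$ ensures that for any face $F\subseteq[n]$ the condition $x^\bsa/x^F\in I_{n+1}$ is equivalent to $x^\bsa/x^F\in I_n$, which lets us match up the relevant parts of $\Delta^{I_{n+1}}_{\bsa'}$ sitting over $[n]$ with $\Delta^{I_n}_\bsa$. The complementary part of $\Delta^{I_{n+1}}_{\bsa'}$, consisting of faces containing $n+1$, is analyzed using the $\Inc$-invariance: by choosing $a_{n+1}$ so that the new generators of $I_{n+1}$ lying in the $\Inc_{n,n+1}$-orbits of old generators get activated, the vertex $n+1$ behaves as a cone point on a subcomplex but not on the full complex, producing a suspension-type effect that raises the homological degree by one. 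This is the content of the technical lemma \Cref{lem_betti_num_saturated2} that refines Murai's corresponding step in the $\Sym$-invariant case, together with the criterion of \Cref{lem_induced_zeromap} to ensure that the relevant connecting map on homology is nonzero.

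The main obstacle is precisely this Betti-number comparison. In Murai's $\Sym$-invariant setting one can freely permute the indices of $\bsa$, which simplifies the combinatorial bookkeeping when identifying faces of $\Delta^{I_{n+1}}_{\bsa'}$ with faces of $\Delta^{I_n}_\bsa$. Under the weaker hypothesis of $\Inc$-invariance, only strictly increasing substitutions are available, so the construction of $\bsa'$ must be more careful (in particular, $a_{n+1}$ should be chosen based on the largest entries of $\bsa$ so that the relevant orbit generators do produce witnesses in $I_{n+1}$ after dividing by $x^F$ for faces $F\ni n+1$), and the nonvanishing of $\widetilde{H}_i(\Delta^{I_{n+1}}_{\bsa'})$ must be verified by a direct Mayer--Vietoris or long-exact-sequence argument rather than by appealing to symmetry.
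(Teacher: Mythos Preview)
Your proposal is correct and follows essentially the same route as the paper: the upper bound $\pd I_n\le n-1$ (which the paper states via Hilbert's syzygy theorem rather than Auslander--Buchsbaum, but this is immaterial), the growth inequality $\pd I_{n+1}\ge \pd I_n+1$ extracted from \Cref{lem_betti_num_saturated2}(ii), and then \Cref{lem_sublinearity}(ii) to conclude. One small imprecision in your commentary: the new coordinate is inserted at position $t+1$ where $t=\maxsupp(\bsa)$, not at position $n+1$; but since you are citing \Cref{lem_betti_num_saturated2} rather than reproving it, this does not affect the argument.
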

	
	Recall that any $\Sym$-invariant chain of ideals is also $\Inc$-invariant and eventually saturated (see \cite[Corollary 6.1]{KLR}). Thus, \Cref{thm_projdim} generalizes Murai's result on $\Sym$-invariant chains \cite[Corollary 3.7]{Mu}. To prove this theorem we follow his strategy, relating the non-vanishing of certain Betti numbers of $I_n$ and $I_{n+1}$ when $n\gg0.$ More specifically, the main ingredient in the proof is the following generalization and slight improvement of \cite[Proposition 3.6]{Mu}.

	\begin{lem}
		\label{lem_betti_num_saturated2}
		Let $\Icc=(I_n)_{n\ge 1}$ be a nonzero saturated $\Inc$-invariant chain of proper monomial ideals with $r=\ind(\Icc)$. Let 
		$\bsa=(a_1,\ldots,a_t,0,\ldots,0) \in \Z^n_{\ge0}$ with $t\ge 1$, $a_t\ge 1$, and $n \ge r$. For each $p\in \Z_{\ge0}$ denote $(\bsa,p)=(a_1,\ldots,a_t,p,0,\ldots,0) \in \Z^{n+1}_{\ge0}$. 
		Assume that $\beta_{i,\bsa}(I_n) \neq 0$ for some $i\ge 0$. Then the following hold.
		\begin{enumerate}
			\item 
			$\lambda(\Icc)\le a_t$.
			\item
			There exists an integer $p$ with $\lambda(\Icc) \le p \le a_t$ such that
			$
			\beta_{i+1,(\bsa,p)}(I_{n+1})\neq 0.
			$
		\end{enumerate}
	\end{lem}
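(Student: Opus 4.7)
The plan is to prove (i) by a short lcm-lattice argument and (ii) by a Mayer--Vietoris argument applied to a natural filtration of $\Delta^{I_{n+1}}_{(\bsa,p)}$ by $x_{n+1}$-degree.

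For (i), \Cref{lem_multigradedBetti_num} forces $x^\bsa \in \Lcm(I_n)$, so write $x^\bsa = \lcm(v_1,\dots,v_k)$ with $v_j \in G(I_n)$. Since $\maxsupp(x^\bsa) = t$, some $v_{j_0}$ has $\maxsupp(v_{j_0}) = t$; as it divides $x^\bsa$ its $x_t$-exponent $\lambda(v_{j_0})$ satisfies $\lambda(v_{j_0}) \le a_t$. Combined with $\lambda(\Icc) = \lambda(I_n) \le \lambda(v_{j_0})$ from \Cref{lem_lambda_saturated} (using saturation and $n \ge r$), this gives $\lambda(\Icc) \le a_t$.

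For (ii), set
\[
\Gamma_p \defas \{F \subseteq [n] : x^\bsa x_{n+1}^{p-1}/x^F \in I_{n+1}\}, \quad p \ge 1.
\]
By saturation $\Gamma_1 = \Delta^{I_n}_\bsa$, the $\Gamma_p$ form an increasing chain, and in $\Delta^{I_{n+1}}_{(\bsa,p)}$ the link of $n+1$ is $\Gamma_p$ while the deletion of $n+1$ is $\Gamma_{p+1}$. Mayer--Vietoris applied to $\Delta^{I_{n+1}}_{(\bsa,p)} = \Gamma_{p+1} \cup (\{n+1\} * \Gamma_p)$ (contractible cone, intersection $\Gamma_p$) produces a surjection
\[
\widetilde H_i\bigl(\Delta^{I_{n+1}}_{(\bsa,p)}\bigr) \twoheadrightarrow \ker\bigl(\widetilde H_{i-1}(\Gamma_p) \to \widetilde H_{i-1}(\Gamma_{p+1})\bigr).
\]
Pick a nonzero $[c] \in \widetilde H_{i-1}(\Gamma_1)$ (available by hypothesis) and track its image in $\widetilde H_{i-1}(\Gamma_p)$. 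Two bounds would pin down where this image dies. For $p \le \lambda(\Icc)$: any generator of $I_{n+1}$ with $\maxsupp = n+1$ has last exponent $\ge \lambda(I_{n+1}) = \lambda(\Icc) > p-1$, so cannot divide, and the only divisors lie in $I_{n+1} \cap R_n = I_n$; hence $\Gamma_p = \Gamma_1$ and the image of $[c]$ is still $[c] \ne 0$. For $p = a_t + 1$: the cone $\{t\} * \Gamma_1$ will be shown to sit inside $\Gamma_{a_t+1}$, and since it is contractible, \Cref{lem_induced_zeromap} makes the induced map $\widetilde H_{i-1}(\Gamma_1) \to \widetilde H_{i-1}(\Gamma_{a_t+1})$ vanish, so the image of $[c]$ dies by step $a_t + 1$. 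Letting $p^*$ be the smallest step at which it dies, we have $p^* \in [\lambda(\Icc)+1,\, a_t+1]$, and taking $p \defas p^* - 1$ exhibits a nonzero kernel, yielding $\beta_{i+1,(\bsa,p)}(I_{n+1}) \ne 0$ as required.

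The main obstacle is establishing the cone inclusion $\{t\} * \Gamma_1 \subseteq \Gamma_{a_t+1}$: it is exactly what forces the quantitative bound $p \le a_t$, and the only place where one must leave $R_n$ and invoke the chain's $\Inc$-invariance. For $F \in \Gamma_1$ with $t \notin F$ we must show $M_F \defas x^\bsa x_{n+1}^{a_t}/(x_t x^F) \in I_{n+1}$. Pick $u \in G(I_n)$ with $u \mid x^\bsa/x^F$ and let $b_t$ denote its $x_t$-exponent; necessarily $b_t \le a_t$. If $b_t < a_t$, then $u$ already divides $M_F$. If $b_t = a_t$, then $\supp(u) \subseteq [t]$, so by saturation $u \in I_t$; the shift $\pi \in \Inc_{t,n+1}$ with $\pi(j) = j$ for $j < t$ and $\pi(t) = n+1$ then gives $\pi(u) = (u/x_t^{a_t})\, x_{n+1}^{a_t} \in I_{n+1}$ by $\Inc$-invariance, and a direct exponent check shows $\pi(u) \mid M_F$. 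The apex $\{t\}$ itself corresponds to $F = \emptyset$, which lies in $\Gamma_1$ since $x^\bsa \in \Lcm(I_n) \subseteq I_n$.
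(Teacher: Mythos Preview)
Your argument for (i) is correct and matches the paper's. Your Mayer--Vietoris strategy for (ii) is also the paper's approach, but there is a genuine gap: you have misread the notation $(\bsa,p)$. The lemma explicitly sets $(\bsa,p)=(a_1,\ldots,a_t,p,0,\ldots,0)$, so $x^{(\bsa,p)}=x^\bsa x_{t+1}^p$ and the extra coordinate sits in position $t+1$, not $n+1$. Whenever $t<n$ the vertex $n+1$ lies outside $\supp(x^{(\bsa,p)})$ and hence can never occur in a face of $\Delta^{I_{n+1}}_{(\bsa,p)}$; your claimed decomposition $\Delta^{I_{n+1}}_{(\bsa,p)}=\Gamma_{p+1}\cup(\{n+1\}*\Gamma_p)$ is therefore false in general. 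What your argument actually establishes is $\beta_{i+1,(a_1,\dots,a_t,0,\dots,0,p)}(I_{n+1})\ne0$ for some $\lambda(\Icc)\le p\le a_t$---a true and equally useful statement for the applications in \Cref{thm_projdim} and \Cref{lem_saturated_ultimate}, but not the lemma as stated.

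The repair is immediate: rerun the same argument with $x_{t+1}$ in place of $x_{n+1}$. Set $\Delta^{(p)}=\{F\subseteq[t]:x^\bsa x_{t+1}^p/x^F\in I_{n+1}\}$; then $\Delta^{(0)}=\Delta^{I_n}_\bsa$ by saturation, the decomposition of $\Delta^{I_{n+1}}_{(\bsa,p)}$ at vertex $t+1$ reads $\Delta^{(p)}\cup(\{t+1\}*\Delta^{(p-1)})$, and the cone inclusion $\{t\}*\Delta^{(0)}\subseteq\Delta^{(a_t)}$ follows more directly by applying $\sigma_{t-1}\in\Inc_{n,n+1}$ (fixing $x_1,\dots,x_{t-1}$ and sending $x_t\mapsto x_{t+1}$) to $x^\bsa/x^F\in I_n$, with no need for your case split on $b_t$ or the detour through $u\in I_t$. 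With this correction your proof coincides with the paper's.
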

	
	\begin{proof}   
		(i) Since $\beta_{i,\bsa}(I_n)\neq 0$, \Cref{lem_multigradedBetti_num} implies $x^\bsa \in \Lcm(I_n)$, i.e. $x^\bsa=\lcm(u_1,\dots,u_k)$ for some $u_1,\dots,u_k\in G(J)$. It follows that $a_t=\lambda(x^\bsa)=\lambda(u_i)$ for some $i\in[k]$. Hence, $$a_t\ge \lambda(I_n)=\lambda(\Icc),$$
		where the last equality is due to \Cref{lem_lambda_saturated}.
		
		(ii) Here the proof of \cite[Proposition 3.6]{Mu} carries over with some adjustments. We include the details for the sake of completeness. 
		For $p\ge0$ set
		$$
		\Delta^{(p)}=\left\{F\subseteq [t]: x_{t+1}^p\frac{x^\bsa}{x^F} \in I_{n+1}\right\}
		$$
		and denote $\Delta=\Delta^{(0)}$. Also, let 
		$
		\Sigma
		=\Delta^{[t]}\defas\Delta \cup \{F\cup \{t\}: F\in \Delta\}
		$
		be the cone over $\Delta$ with apex $t$. 
		We first prove the following:
		\begin{enumerate}[\quad \rm(a)]
			\item 
			$\Delta^{(0)}\subseteq \Delta^{(1)} \subseteq \cdots \subseteq  \Delta^{(a_t)}$;
			\item 
			$\Delta^{I_n}_\bsa=\Delta=\Delta^{(\lambda-1)}$, where 
			$\lambda=\lambda(\Icc)$;
			\item 
			$\Delta^{I_{n+1}}_{(\bsa,p)}
			=\Delta^{(p)} \cup \{\{t+1\} \cup F: F\in  \Delta^{(p-1)}\}$;
			\item 
			$\Delta^{(a_t)} \supseteq \Sigma.$
		\end{enumerate}
		
		For (a): This is immediate.
		
		For (b): We first show that $\Delta^{I_n}_\bsa=\Delta.$
		If $F\in \Delta^{I_n}_\bsa$, then 
		$F\subseteq \supp x^\bsa\subseteq [t]$. This implies the second equality in the chain
		\begin{align*}
			\Delta^{I_n}_\bsa &= \left\{F\subseteq [n]: \frac{x^\bsa}{x^F} \in I_n\right\} 
			= \left\{F\subseteq [t]: \frac{x^\bsa}{x^F} \in I_n\right\} 
			=  \left\{F\subseteq [t]: \frac{x^\bsa}{x^F} \in I_{n+1}\right\}=\Delta.
		\end{align*}
		The third equality holds since $I_n=I_{n+1}\cap R_n$, which is because $\Icc$ is saturated. To prove 
		$\Delta=\Delta^{(\lambda-1)}$ 
		let $F\in\Delta^{(\lambda-1)}$. Then 
		\[
		x_{t+1}^{\lambda-1}\frac{x^\bsa}{x^F} \in I_{n+1}.
		\]
		This means that $x_{t+1}^{\lambda-1}\frac{x^\bsa}{x^F}$ is divisible by some $u\in G(I_{n+1})$. Since $\lambda(u)\ge \lambda(I_{n+1})=\lambda$ by \Cref{lem_lambda_saturated}, it follows that $u$ divides $\frac{x^\bsa}{x^F}$. Hence, 
		$\frac{x^\bsa}{x^F}\in I_{n+1}$, and therefore
		$F\in \Delta^{(0)}=\Delta.$
		
		For (c): 
		If $F\in \Delta^{I_{n+1}}_{(\bsa,p)}$, then $F \subseteq \supp x^{(\bsa,p)} \subseteq [t+1]$. Assume that $F\subseteq [t+1]$. 
		If $t+1\notin F$, then 
		\begin{align*}
			& F\in \Delta^{I_{n+1}}_{(\bsa,p)}  
			\Leftrightarrow 
			F\subseteq [t] ~ \text{ and } ~ x_{t+1}^p\frac{x^\bsa}{x^F} \in I_{n+1} 
			\Leftrightarrow 
			F\in \Delta^{(p)}.
		\end{align*}
		On the other hand, if $t+1\in F$, then $F=\{t+1\}\cup G$ with $G\subseteq [t]$. Hence,
		\begin{align*}
			F\in \Delta^{I_{n+1}}_{(\bsa,p)} 
			& \Leftrightarrow 
			G\subseteq [t] ~ \text{ and } ~ x_{t+1}^p\frac{x^\bsa}{x^{G\cup \{t+1\}}} \in I_{n+1} \\
			&\Leftrightarrow 
			G\subseteq [t] ~ \text{ and } ~ x_{t+1}^{p-1}\frac{x^\bsa}{x^G} \in I_{n+1} \\
			&\Leftrightarrow
			G\in \Delta^{(p-1)}.
		\end{align*}
		Thus we obtain (c).
		
		For (d): 
		It suffices to show that if $F\in \Delta$ and $t\notin F$, then $F\cup \{t\} \in \Delta^{(a_t)}$. Since $F\in \Delta$, it follows from (a) that $F\subseteq [t-1]$ and 
		\begin{equation}
			\label{eq_I_n}
			\frac{x^\bsa}{x^F}=\frac{x_1^{a_1}\cdots x_{t-1}^{a_{t-1}}}{x^F}x_t^{a_t} \in I_n.
		\end{equation}
		Let $\sigma_{t-1}:\N\to\N$ be the map defined by
		\[
		\sigma_{t-1}(i)
		=
		\begin{cases}
			i&\text{if }\ 1\le i\le t-1,\\
			i+1&\text{if }\ i\ge t.
		\end{cases}
		\]
		Evidently, $\sigma_{t-1}\in\Inc_{n,n+1}$. Thus 
		$\sigma_{t-1}(I_n)\subseteq I_{n+1}$, and so \eqref{eq_I_n}
		implies that
		\[
		I_{n+1} \ni x_t^{a_t-1}\sigma_{t-1}\left(\frac{x^\bsa}{x^F}\right)= x_t^{a_t-1}\frac{x_1^{a_1}\cdots x_{t-1}^{a_{t-1}}}{x^F}x_{t+1}^{a_t}=x_{t+1}^{a_t}\frac{x^\bsa}{x^{F\cup \{t\}}}.
		\]
		Therefore $F\cup \{t\} \in \Delta^{(a_t)}$, as wanted.

		Next we prove the desired assertion on Betti numbers. 
		By \Cref{lem_multigradedBetti_num} and Claim (b), one gets 
		$$
		0 < \beta_{i,\bsa}(I_n) 
		=\dim_K \widetilde{H}_{i-1}(\Delta^{I_n}_\bsa) 
		= \dim_K \widetilde{H}_{i-1}(\Delta^{(\lambda-1)}).
		$$
		Take a non-zero element 
		$\alpha \in\widetilde{H}_{i-1}(\Delta^{(\lambda-1)})$. Consider the maps
		\[
		\widetilde{H}_{i-1}(\Delta^{(\lambda-1)}) \xrightarrow{\iota_{\lambda}} 
		\widetilde{H}_{i-1}(\Delta^{(\lambda)})  \xrightarrow{\iota_{\lambda+1}} \cdots  \xrightarrow{\iota_{a_t}} 
		\widetilde{H}_{i-1}(\Delta^{(a_t)}) 
		\]
		induced by the chain in Claim (a). Denote 
		$\alpha_{\lambda-1}=\alpha$ and 
		$\alpha_k=(\iota_k \circ \cdots \circ \iota_{\lambda})(\alpha)$ for $\lambda \le k\le a_t$.
		Note that $\iota_{a_t} \circ \cdots \circ \iota_{\lambda}$ is the natural map induced by the inclusion 
		$\Delta^{(\lambda-1)}\subseteq \Delta^{(a_t)}.$ Note also that $\widetilde{H}_{i-1}(\Sigma)=0$ since $\Sigma$ is a cone. So Claim (d) and \Cref{lem_induced_zeromap} yield  
		\[
		\iota_{a_t} \circ \cdots \circ \iota_{\lambda}=0.
		\]
		In particular, $\alpha_{a_t}=0$. Thus there exists an integer $p$ with $\lambda \le p\le a_t$ such that $\alpha_{p-1}\neq 0$ but $\alpha_p=0$.
		
		Set $\Gamma= \Delta^{(p-1)}$ and let $\Gamma^{[t+1]}$ be the cone over $\Gamma$ with apex $t+1$. By Claim (c),
		$$
		\Delta^{I_{n+1}}_{(\bsa,p)}
		=\Delta^{(p)} \cup \{\{t+1\} \cup F: F\in  \Delta^{(p-1)}\}
		=\Delta^{(p)} \cup \Gamma^{[t+1]}.
		$$
		One has $\Delta^{(p)} \cap \Gamma^{[t+1]}=\Gamma$ since 
		$t+1\not\in\Delta^{(p)}$. Consider the Mayer-Vietoris exact sequence
		\[
		\cdots
		\to\widetilde{H}_i(\Delta^{I_{n+1}}_{(\bsa,p)}) 
		\xrightarrow{\delta}
		\widetilde{H}_{i-1}(\Gamma) \xrightarrow{\eta}  
		\widetilde{H}_{i-1}(\Delta^{(p)})\oplus\widetilde{H}_{i-1}(\Gamma^{[t+1]})
		=\widetilde{H}_{i-1}(\Delta^{(p)})
		\to
		\cdots,
		\]
		where $\widetilde{H}_{i-1}(\Gamma^{[t+1]})=0$ because 
		$\Gamma^{[t+1]}$ is a cone.
		One can check that $\eta(\alpha_{p-1})=\pm\alpha_p=0$. Hence, 
		$0\ne \alpha_{p-1}\in \Ker \eta=\Img \delta$. 
		It follows that 
		$\widetilde{H}_i(\Delta^{I_{n+1}}_{(\bsa,p)}) \ne0$, and therefore
		\[
		\beta_{i+1,(\bsa,p)}(I_{n+1})=
		\dim_K \widetilde{H}_i(\Delta^{I_{n+1}}_{(\bsa,p)})\neq 0. \qedhere
		\]
	\end{proof}

	Now we are ready to prove \Cref{thm_projdim}.
	
	\begin{proof}[Proof of Theorem \ref{thm_projdim}]
		Note that $\pd I_n\le n-1$ for all $n\ge 1$ by Hilbert's syzygy theorem. On the other hand, it follows from \Cref{lem_betti_num_saturated2}(ii) that
		\[
		\pd I_{n+1} \ge \pd I_n+1
		\quad\text{for all }\ n\ge r.
		\]
		Now the proof is complete by using \Cref{lem_sublinearity}.
	\end{proof}

	\begin{rem}
		Keep the assumption of \Cref{thm_projdim}.
		\begin{enumerate}
			\item
			It is natural to ask when $\pd I_n$ becomes a linear function, or slightly less specifically, can one find a bound $s$ such that
			\[
			\pd I_{n+1} = \pd I_n+1
			\quad\text{for all }\ n\ge s?
			\]
			If $\Icc$ is a $\Sym$-invariant chain of monomial ideals, then $s$ can be simply chosen to be the stability index 
			$\ind(\Icc)$, as shown in \cite[Corollary 3.7]{Mu}. However, this bound is not true for $\Inc$-invariant chains; see \Cref{ex_pd-decrease}.
			\item
			By the Auslander--Buchsbaum formula, the constant $d$ in \Cref{thm_projdim} can be expressed as 
			$d=1+\lim\limits_{n\to \infty} \depth(R_n/I_n).$
			It would be interesting to have a combinatorial interpretation of $d$ in terms of the chain $\Icc$.
		\end{enumerate}
	\end{rem}
	
	To close this section we note that the proofs of \Cref{thm_projdim,lem_betti_num_saturated2} 
	cannot be directly extended to arbitrary non-saturated chains. This can be seen by the following example, where the projective dimension and regularity may decrease after the stability index. 
	
	\begin{ex}
		\label{ex_pd-decrease}
		Consider the $\Inc$-invariant chain 
		$\Icc=(I_n)_{n\ge 1}$ with $\ind(\Icc)=r\ge 6$ and 
		\[
		I_r=\langle x_1,x_3,x_4,\ldots,x_{r-1}\rangle \langle x_1x_3x_4\cdots x_{r-1}\rangle +\langle x_2x_3,x_{r-1}x_r\rangle.
		\]
		Then it holds that
		\begin{align*}
			\pd I_r &= r-1,\ \ \pd I_{r+1}=3,\  \ \pd I_{r+2}=5.
		\end{align*}
		In fact, we see that $\depth(R_r/I_r)=0$ since
		$x_1x_3x_4\cdots x_{r-1}$ belongs to the socle of $R_r/I_r$. So by the Auslander--Buchsbaum formula, $\pd I_r =\pd(R_r/I_r)-1= r-1$. 
		
		In order to show $\pd I_{r+1}=3$, we first check that
		$$
		\Inc_{r,r+1}(x_1x_3x_4\cdots x_{r-1}) 
		\subseteq \langle x_3x_4, x_{r-1}x_r\rangle.
		$$
		Indeed, for any $\pi \in \Inc_{r,r+1}$ one has $4\le \pi(4)\le 5$ since $\pi: \N \to \N$ is increasing and $\pi(r)\le r+1$. If $\pi(4)=4$, then necessarily $\pi(3)=3$, so $\pi(x_1x_3x_4\cdots x_{r-1})$ is a multiple of $x_3x_4$. If $\pi(4)=5$, then $\pi(j)=j+1$ for $4\le j\le r$, and so $\pi(x_1x_3x_4\cdots x_{r-1})$ is a multiple of $\pi(x_{r-2}x_{r-1})=x_{r-1}x_r$. This yields the above containment.
		It follows that
		$$I_{r+1}
		=\langle\Inc_{r,r+1}(I_r)\rangle
		= \langle\Inc_{r,r+1}(x_2x_3,x_{r-1}x_r)\rangle
		=\langle x_2x_3,x_2x_4,x_3x_4\rangle
		+\langle x_{r-1}x_r,x_{r-1}x_{r+1},x_rx_{r+1}\rangle.
		$$ 
		Hence,
		\[
		\pd I_{r+1}=\pd \langle x_2x_3,x_2x_4,x_3x_4\rangle
		+\pd \langle x_{r-1}x_r,x_{r-1}x_{r+1},x_rx_{r+1}\rangle+1=3.
		\]
		
		Similarly, $\pd I_{r+2}=5$. 
		Thus, although the chain $\Icc$ has stability index $r$, the sequence $(\pd I_n)_n$ may fluctuate for a while from $r$ before the stability pattern sets in. 
		
		It is also not hard to show that 
		$\reg I_r =r-1 > \reg I_{r+1}=\reg I_{r+2}=3$. Note that this cannot happen for saturated chains, since for such a chain the sequence $(\reg I_n)_n$ is always non-decreasing after the stability index (see \cite[Proof of Proposition 4.14]{LNNR2}).
	\end{ex}
	
	
	\section{Regularity}
	\label{sec_reg}
	
	For chains of monomial ideals \Cref{conj}(ii) has the following more specific form (see \cite[Conjecture 4.12]{LNNR2}).
	
	\begin{conj}
		\label{conj_reg}
		Let $\Icc=(I_n)_{n\ge 1}$ be a nonzero $\Inc$-invariant chain of proper monomial ideals. Then there exists a constant $D$ such that
		\[
		\reg I_n=(w(\Icc)-1)n+D
		\quad\text{for all }\ n\gg0.
		\]
	\end{conj}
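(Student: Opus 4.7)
The plan is to upgrade the asymptotic equality $\lim_{n\to\infty}\reg I_n/n = w(\Icc)-1$ from \Cref{thm_reg_asymp} to the exact eventual linearity asserted. I would invoke \Cref{lem_sublinearity} with slope $s=w(\Icc)-1$ and try to verify its condition~(i): the increment bound $\reg I_{n+1}\le \reg I_n + (w(\Icc)-1)$ for $n\gg 0$, together with a lower bound $\reg I_n \ge (w(\Icc)-1)n+r$ for some $r\in\Z$. The lower bound should follow from the arguments giving the $\ge$ half of \Cref{thm_reg_asymp}: the function $\lambda$ from \Cref{sec_weight-functions}, combined with a Betti-number-production technique in the spirit of \Cref{lem_betti_num_saturated2}, exhibits multigraded Betti numbers of $I_n$ whose $|\bsa|-i$ value grows at rate $w(\Icc)-1$ with only an $O(1)$ deficit; this is essentially the content of \Cref{lem_saturated_ultimate}.

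For the increment bound I would apply \Cref{lem_reg_modulo_variable} to $I_{n+1}$ with the variable $x_{n+1}$. Because $\Icc$ is $\Inc$-invariant, every minimal generator of $I_{n+1}$ is an $\Inc_{r,n+1}$-shift of a generator of $I_r$ with $r=\ind(\Icc)$, so the maximum exponent of any variable in any generator of $I_{n+1}$ is bounded uniformly in $n$ by an integer $W$ depending only on $\Icc$. Consequently the stabilization exponent $d$ of \Cref{lem_reg_modulo_variable} satisfies $d\le W$, and one obtains $\reg I_{n+1}\le \reg\langle I_{n+1}:x_{n+1}^e,\,x_{n+1}\rangle + e$ for some $0\le e\le W$. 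Viewed modulo $x_{n+1}$, the right-hand ideal corresponds to a monomial ideal $J_n\subseteq R_n$, and the increment bound reduces to a uniform estimate $\reg J_n\le \reg I_n + (w(\Icc)-1-e)$.

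Producing this uniform estimate is the main obstacle. The ideal $J_n$ is, roughly speaking, obtained from $I_n$ by a colon-and-truncate operation closely related to the colon filtration of \Cref{defn_colon_filtration}; my hope is to strengthen the induction on the $q$-invariant that underpins \Cref{thm_reg_asymp} so that the inductive hypothesis tracks the constant term, not merely the slope. Two subtleties will complicate this program. First, colons by $x_{n+1}$ do not commute with intersection by $R_n$ for non-saturated chains, as is already visible behind \Cref{ex_pd-decrease}; a natural remedy is to reduce to the saturated case via \Cref{cor_saturation} and then transfer the statement back. Second, and more seriously, the base case of the induction---where the $q$-invariant is minimal---requires matching the $\lambda$-based lower bound against the upper bound up to a single integer, rather than merely up to an additive $O(1)$, which amounts to a genuine refinement of \Cref{lem_saturated_ultimate} that I expect to be the hardest single step.
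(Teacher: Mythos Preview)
The statement you are attempting to prove is \emph{labeled as a conjecture} in the paper and is \emph{not} proved there in general. There is no ``paper's own proof'' to compare against: the paper establishes only the asymptotic version \Cref{thm_reg_asymp}, together with the special cases in \Cref{lem_saturated_ultimate}, \Cref{cor_artin_mono}, and \Cref{cor_saturation}. Your proposal is therefore an attempt to resolve an open problem, not to reproduce a known argument.

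On the substance of your plan: the lower bound $\reg I_n\ge (w(\Icc)-1)n+c$ is indeed established in the proof of \Cref{thm_reg_asymp}, and the upper bound $\reg I_n\le (w(\Icc)-1)n+D'$ comes from \cite[Corollary 4.8]{LNNR2}; so $\reg I_n-(w(\Icc)-1)n$ is already known to be bounded. The entire content of the conjecture, given what the paper proves, is precisely the increment bound $\reg I_{n+1}\le \reg I_n+(w(\Icc)-1)$ (or its reverse) that you flag as the main obstacle---and your proposal does not supply it. Two concrete problems with your approach via $x_{n+1}$: first, since $\maxsupp(I_{n+1})\le n+1-r+p$ with $p=\maxsupp(I_r)$, the variable $x_{n+1}$ typically does not appear in any generator of $I_{n+1}$, so coloning by it is vacuous; the relevant variable is $x_{n+1-r+p}$, and the colon-and-truncate then produces exactly the ideal $I_{e,n+1}$ of \Cref{defn_colon_filtration}, not $I_n$. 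Second, \Cref{prop_colon_filtration}(iv) gives $\reg I_{e,n+1}\le \reg I_{n+1}$, which is the wrong direction for your purposes; there is no available inequality comparing $\reg I_{e,n+1}$ to $\reg I_n$. Your proposed detour through \Cref{cor_saturation} also does not close the gap: that corollary handles $m$-saturations only for $m\ge w(\Icc)$, whereas (as the paper's final remark explains) transferring the conjecture back to $\Icc$ would require it for some $m\le w(\Icc)-1$, which is again the open case. In short, you have correctly located where the difficulty lies, but the steps you sketch do not overcome it; the conjecture remains open.
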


	The main goal of this section is to prove the following evidence for the previous conjecture. We also verify this conjecture for a special class of chains of monomial ideals (see \Cref{lem_saturated_ultimate}) and show that it is true up to a certain saturation (see \Cref{cor_saturation}).
	
	\begin{thm}
		\label{thm_reg_asymp}
		Let $\Icc=(I_n)_{n\ge 1}$ be a nonzero $\Inc$-invariant chain of proper monomial ideals. Then
		\[
		\lim_{n\to\infty}\frac{\reg I_n}{n}=w(\Icc)-1.
		\]
	\end{thm}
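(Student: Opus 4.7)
The strategy is to establish the two inequalities $\limsup_{n\to\infty}(\reg I_n)/n \le w(\Icc)-1$ and $\liminf_{n\to\infty}(\reg I_n)/n \ge w(\Icc)-1$ separately.

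For the upper bound, the plan is to obtain $\reg I_n \le (w(\Icc)-1)n + C$ for all large $n$ by iteratively applying \Cref{lem_reg_modulo_variable} to the last variable $x_n$. The role of $w(\Icc)$ is to bound the exponent $d$ at which the colon chain $I_n:x_n^e$ stabilizes, which in turn bounds both the recursion depth in $n$ and the additive term $+e$ one picks up from the lemma. After modding out $x_n$, the resulting ideals are organized into $\Inc$-invariant (sub)chains of monomial ideals in $R_{n-1}$ whose $w$-value does not exceed $w(\Icc)$, so an induction on $n$ telescopes to the desired linear upper bound. This is essentially the strategy behind \cite[Proposition 4.14]{LNNR2}.

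For the lower bound, the plan is a double induction. The outer induction is on the $q$-invariant of $\Icc$ from \cite{NR17}, using the colon filtration of \Cref{defn_colon_filtration} as the reduction mechanism: this filtration produces a new $\Inc$-invariant chain with strictly smaller $q$-invariant and controlled $w$-value. Short exact sequences of the form $0 \to R_n/(I_n:x) \to R_n/I_n \to R_n/(I_n+\langle x\rangle)\to 0$ transfer regularity lower bounds from the reduced chain back to $\Icc$, allowing the inductive hypothesis to propagate. The inner base case is the ``saturated ultimate'' situation of \Cref{lem_saturated_ultimate}, where one has $\lambda(\Icc)=w(\Icc)$. For such a chain, one picks any nonzero multigraded Betti number $\beta_{i,\bsa}(I_r)\ne 0$ with $r=\ind(\Icc)$, and applies \Cref{lem_betti_num_saturated2}(ii) successively to produce, for each $n>r$, a nonzero Betti number $\beta_{i+n-r,(\bsa,p_{r+1},\dots,p_n)}(I_n)$ with every $p_k\ge\lambda(\Icc)$. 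This yields
\[
\reg I_n \;\ge\; |\bsa|+\sum_{k=r+1}^n p_k-(i+n-r)\;\ge\; (\lambda(\Icc)-1)(n-r)+(|\bsa|-i),
\]
and the equality $\lambda(\Icc)=w(\Icc)$ available in the base case completes $\liminf_{n\to\infty}(\reg I_n)/n\ge w(\Icc)-1$.

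The principal obstacle is the design and analysis of the colon filtration: one needs an $\Inc$-invariant reduction that strictly decreases the $q$-invariant while retaining the leading coefficient $w(\Icc)-1$ of the expected linear growth, so that a lower bound of slope $w(\Icc)-1$ for the reduced chain transfers back through the short exact sequences without losing this slope. A secondary difficulty is the passage from an arbitrary (possibly non-saturated) chain to a saturation that has the same $w$-value, which is what makes \Cref{lem_betti_num_saturated2} available in the base case of the lower-bound induction.
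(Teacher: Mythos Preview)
Your proposal is correct and matches the paper's approach: the upper bound is \cite[Corollary 4.8]{LNNR2}, and the lower bound is by induction on $q(\Icc)$ using the colon filtration $\Icc_{e}$ with $e=w(\Icc)-1$, transferring the bound back via \Cref{prop_colon_filtration}(iv) and terminating at \Cref{lem_saturated_ultimate}. The one refinement worth noting is that the ``secondary difficulty'' you flag is resolved not by passing to an auxiliary saturation but by \Cref{lem_q_w}, which shows that whenever $q(\Icc_{w-1})=q(\Icc)$ the original chain $\Icc$ is already quasi-saturated and $\lambda$-maximal, so \Cref{lem_betti_num_saturated2} applies to it directly (via \Cref{rem_quasi-saturated}).
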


	We prove this result using induction on the \emph{$q$-invariant} of $\Icc$, defined as follows.

	\begin{defn}
		\label{defn_delta_alpha}
		Let $J\subseteq R_n$ be a nonzero monomial ideal with 
		$\maxsupp(J)=p$. Let $\delta(J)$ denote the maximal degree of a minimal monomial generator of $J$. Then the number
		\[
		{q}(J) 
		= 
		\sum_{j=0}^{\delta(J)}\dim_K \Big(\frac{R_p}{J\cap R_p}\Big)_j
		\]
		is called the \emph{$q$-invariant} of $J$.
		
		For an $\Inc$-invariant chain of nonzero monomial ideals $\Icc=(I_n)_{n\ge 1}$ with $r=\ind(\Icc)$, we set $q(\Icc)=q(I_r)$ and call it the \emph{$q$-invariant} of $\Icc$.
	\end{defn}
	
	The $q$-invariant is introduced in \cite{NR17} (in a slightly different form). It is very useful for inductive arguments on $\Inc$-invariant chains of monomial ideals, as illustrated in \cite{LNNR,LNNR2,NR17}.

	\begin{ex}\hfill
		\begin{enumerate}
			\item 
			For an $\Inc$-invariant chain of nonzero monomial ideals $\Icc=(I_n)_{n\ge 1}$, $q(\Icc)=0$ if and only if $I_n=R_n$ for some $n\ge 1.$
			\item
			Let $J=\langle x_1^2\rangle \subseteq R_2$. Then 
			$\maxsupp(J)=1$, $\delta(J)=2$ and
			\[
			{q}(J) 
			= \sum_{j=0}^{2}\dim_K \Big(\frac{R_1}{J\cap R_1}\Big)_j
			=\sum_{j=0}^{2}\dim_K \Big(\frac{K[x_1]}{\langle x_1^2\rangle}\Big)_j
			=2.
			\]
		\end{enumerate}
	\end{ex}
	
	The following construction, inspired by an analogous construction in \cite[Lemma 6.10]{NR17}, will be used to decrease the $q$-invariant.
	
	\begin{defn}
		\label{defn_colon_filtration}
		Let $e\ge 0$ be an integer and $\Icc=(I_n)_{n\ge 1}$ a nonzero $\Inc$-invariant chain of monomial ideals with $\ind(\Icc)=r$. Denote $p=\maxsupp(I_r)$.  We define a chain $\Icc_e=(I_{e,n})_{n\ge 1}$ as follows: 
		$$
		I_{e,n}= \begin{cases}
			0 &\text{if $n \le r$},\\
			\langle \left(I_n:x^e_{n-r+p}\right) \cap R_{n-r+p-1} \rangle_{R_n} &\text{if $n\ge r+1$}.
		\end{cases}
		$$ 
	\end{defn}
	\begin{rem}
		Keep the assumption of \Cref{defn_colon_filtration}.
		\begin{enumerate}
			\item 
			It is evident that $\maxsupp(I_n)\le n-r+p$ for all $n\ge r$ since $I_n=\langle\Inc_{r,n}(I_r)\rangle_{R_n}$, and one can show that equality occurs if the chain $\Icc$ is saturated.
			\item
			The chain $\Icc_e$ in \Cref{defn_colon_filtration} is different from the one considered in \cite[Lemma 6.10]{NR17}. The latter is defined by taking colon ideals by fixed variables, while in \Cref{defn_colon_filtration} we essentially take colon ideals by last variables in the supports. 
			In this paper it is more convenient to work with the chain in \Cref{defn_colon_filtration} because it is also $\Inc$-invariant (see \Cref{prop_colon_filtration} below). The chain in \cite{NR17}, on the other hand, is only invariant with respect to a certain submonoid of $\Inc$.
		\end{enumerate}
	\end{rem}
	
	Basic properties of the chain $\Icc_e$ are collected in the next result. We postpone its technical proof to the Appendix. See \cite[Lemmas 5.3 and 5.10]{LNNR2} for analogous properties of the chain considered in \cite[Lemma 6.10]{NR17}.
	
	\begin{lem}
		\label{prop_colon_filtration}
		Let $e\ge 0$ be an integer and $\Icc=(I_n)_{n\ge 1}$ a nonzero $\Inc$-invariant chain of monomial ideals with $\ind(\Icc)=r$. Employ the notation of \Cref{defn_colon_filtration}. Then the following hold.
		\begin{enumerate}
			\item 
			$\Icc_e$ is a nonzero $\Inc$-invariant chain of monomial ideals with $\ind (\Icc_e) = r+1$. 
			\item 
			One has
			$
			{q} (\Icc_e)\le {q}(\Icc),
			$
			and equality occurs if and only if $I_{e,r+1}=\langle I_r \rangle_{R_{r+1}}$.
			\item 
			One has $w(\Icc_e) \le w(\Icc)$, with equality if 
			$0\le e \le w(\Icc)-1$.
			\item 
			It holds that $\reg I_n\ge \reg I_{e,n}$ for all $n\ge r+1$.
		\end{enumerate} 
	\end{lem}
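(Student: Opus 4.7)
The proof rests on the observation that, for $n \ge r$, the equality $I_n = \langle \Inc_{r,n}(I_r)\rangle_{R_n}$ together with $\maxsupp(I_r) = p$ yields $\maxsupp(I_n) = n-r+p$ (the maximum being witnessed by $\pi \in \Inc_{r,n}$ with $\pi(p) = n-r+p$). Thus $x_{n-r+p}$ plays the role of the ``natural last variable'' of $I_n$, and $I_{e,n}$ is supported on $\{x_1,\ldots,x_{n-r+p-1}\}$.

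For (i), nonzeroness follows from $\langle I_r\rangle_{R_{r+1}} \subseteq I_{e,r+1}$: any $u \in G(I_r)$ lies in $R_p \cap I_{r+1}$ and is coprime to $x_{p+1}$, so $u \in (I_{r+1} : x_{p+1}^e) \cap R_p$. To check $\Inc$-invariance, fix $v \in G(I_{e,n})$ (so $v \in R_{n-r+p-1}$ and $v\,x_{n-r+p}^e \in I_n$) and $\pi \in \Inc_{n,n+1}$; by case analysis on the unique index skipped by $\pi$, one constructs $\widetilde\pi \in \Inc_{n,n+1}$ that agrees with $\pi$ on $[n-r+p-1]$ and sends $n-r+p$ to $n-r+p+1$. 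Then $\widetilde\pi(v\,x_{n-r+p}^e) = \pi(v)\,x_{n-r+p+1}^e$ lies in $I_{n+1}$ by $\Inc$-invariance of $\Icc$, so $\pi(v) \in I_{e,n+1}$. For the reverse inclusion $I_{e,n+1} \subseteq \langle \Inc_{n,n+1}(I_{e,n})\rangle_{R_{n+1}}$ (which upgrades invariance to stability at $r+1$), take $w \in G(I_{e,n+1})$; then $w\,x_{n-r+p+1}^e \in I_{n+1}$ is divisible by some $\pi(u)$ with $u \in G(I_n)$ and $\pi \in \Inc_{n,n+1}$, and a case split on whether $x_{n-r+p+1} \in \supp \pi(u)$ and on the exponent of that variable in $\pi(u)$ produces a generator $u' \in G(I_{e,n})$ and $\pi' \in \Inc_{n,n+1}$ with $\pi'(u') \mid w$. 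The combinatorial bookkeeping in this last extraction is the main obstacle of the whole proof.

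For (iii), the inclusion $I_r \subseteq I_{e,r+1}$ shows that each $u \in G(I_r)$ is divisible by some $v \in G(I_{e,r+1})$ with $w(v) \le w(u)$, giving $w(\Icc_e) \le w(\Icc)$. For the reverse when $e \le w(\Icc)-1$, any $v \in G(I_{e,r+1})$ satisfies $v\,x_{p+1}^e \in I_{r+1}$ and is therefore a multiple of some $u \in G(I_{r+1})$ with $w(u) \ge w(I_{r+1}) = w(\Icc)$; the variable realizing $w(u)$ cannot be $x_{p+1}$ (since its exponent in $v\,x_{p+1}^e$ is $e < w(\Icc)$), so it must already appear in $v$ with exponent $\ge w(\Icc)$, giving $w(v) \ge w(\Icc)$. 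For (ii), set $J = I_{e,r+1} \cap R_p$ and $p' = \maxsupp(I_{e,r+1}) \le p$. The tensor decomposition $R_p/J \cong (R_{p'}/(J \cap R_{p'}))[x_{p'+1},\ldots,x_p]$, the containment $J \supseteq I_r$, and the bound $\delta(I_{e,r+1}) \le \delta(I_r)$ (each minimal generator of $I_{e,r+1}$ having the form $\sigma(u)/x_{p+1}^s$ for some $u \in G(I_r)$, $\sigma \in \Inc_{r,r+1}$, and $0 \le s \le e$) combine on summation to yield $q(\Icc_e) \le q(\Icc)$. A careful analysis of the equality case forces $p' = p$, $J = I_r$, and $\delta(I_{e,r+1}) = \delta(I_r)$, which collectively amount to $I_{e,r+1} = \langle I_r\rangle_{R_{r+1}}$.

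For (iv), apply \Cref{lem_reg_modulo_variable} to $I_n$ with variable $x_{n-r+p}$, obtaining $\reg \langle I_n : x_{n-r+p}^e,\; x_{n-r+p}\rangle \le \reg I_n$. Since $\maxsupp(I_n) \le n-r+p$, the ideal $I_n : x_{n-r+p}^e$ is supported in $[n-r+p]$, whence $\langle I_n : x_{n-r+p}^e,\; x_{n-r+p}\rangle = \langle I_{e,n},\; x_{n-r+p}\rangle$. Because $x_{n-r+p}$ does not appear in any generator of $I_{e,n}$, it is a nonzerodivisor on $R_n/I_{e,n}$, yielding $\reg\langle I_{e,n},\; x_{n-r+p}\rangle = \reg I_{e,n}$. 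Combining these gives $\reg I_{e,n} \le \reg I_n$, completing (iv).
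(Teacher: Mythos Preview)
Your proof is correct and follows essentially the same route as the paper's: the same case analysis on $\sigma_j(v)$ for stability of $\Icc_e$ in (i), the same comparison of Hilbert functions via the containment $\langle I_r\rangle \subseteq I_{e,r+1}$ and the bound $\delta(I_{e,r+1})\le \delta(I_r)$ in (ii), the same divisibility argument in (iii), and the identical use of \Cref{lem_reg_modulo_variable} plus the nonzerodivisor observation in (iv).

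One small caution: in your opening sentence you assert $\maxsupp(I_n)=n-r+p$ for all $n\ge r$, but the paper only records the inequality $\maxsupp(I_n)\le n-r+p$ (with equality claimed only for saturated chains), and your witness $\pi(u)$ need not be a \emph{minimal} generator of $I_n$. Fortunately your actual arguments for (i)--(iv) use only the inequality, so this does not affect correctness; just soften the preamble to ``$\le$'' to match what is really needed.
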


	To prove \Cref{thm_reg_asymp} by induction we replace the chain $\Icc$ with $\Icc_e$, employing the inequality $q(\Icc_e)\le q(\Icc)$ in \Cref{prop_colon_filtration}(ii). For the induction step it is thus necessary to distinguish the case $q(\Icc)= q(\Icc_e)$. In fact, one only needs
	to consider this case when $e=w(\Icc)-1$. This leads to the following notions.
	Recall that $\lambda(\Icc)\le w(\Icc)$ by \Cref{lem_lambda}.
	
	\begin{defn}
		\label{defn_lambda-max}
		A nonzero $\Inc$-invariant chain $\Icc=(I_n)_{n\ge 1}$ of proper monomial ideals is called \emph{$\lambda$-maximal} if 
		$\lambda(\Icc)= w(\Icc)$.
	\end{defn}
	
	\begin{defn}
		Let $\Icc=(I_n)_{n\ge 1}$ be an $\Inc$-invariant chain of ideals with $r=\ind(\Icc)$ and let $p=\maxsupp(I_r)$. We say that $\Icc$ is \emph{quasi-saturated} if
		$
		I_{n+1}\cap R_{n-r+p}=I_n\cap R_{n-r+p}
		\text{ for all } n\ge r.
		$
	\end{defn}
	
	\begin{rem}
		\label{rem_quasi-saturated}
		Let $\Icc=(I_n)_{n\ge 1}$ be a quasi-saturated chain with 
		$r=\ind(\Icc)$ and $p=\maxsupp(I_r)$. Denote by $I=\bigcup_{n\ge1}I_n$ the limit ideal of the chain. Let 
		$\overline{\Icc}=(\overline{I}_n)_{n\ge 1}$ 
		be the saturated chain of $I$, i.e. $\overline{I}_n=I\cap R_n$ for $n\ge 1$. Then 
		$
		\overline{I}_{n-r+p}
		=I\cap R_{n-r+p}
		=\bigcup_{m\ge n} (I_m\cap R_{n-r+p})
		=I_n\cap R_{n-r+p}
		\text{ for all } n\ge r.
		$
		This yields $I_n=\langle \overline{I}_{n-r+p} \rangle_{R_n}$ for all $n\ge r$. Thus the chain $\Icc$ is a shift of the saturated chain $\overline{\Icc}$ (up to finitely many ideals in the beginning of the chain), which justifies the name ``quasi-saturated''. For this reason,  \Cref{lem_lambda_saturated,lem_betti_num_saturated2} also hold for any quasi-saturated chain of monomial ideals.
	\end{rem}

	We need the following characterization of the equality 
	$q(\Icc)= q(\Icc_e)$.

	\begin{lem}
		\label{lem_q_w}
		Let $\Icc=(I_n)_{n\ge 1}$ be a nonzero $\Inc$-invariant chain of  proper monomial ideals with $r=\ind(\Icc)$. For $e\ge0$ consider the chain ${\Icc}_e$ as in \Cref{defn_colon_filtration}. The following are equivalent:
		\begin{enumerate}
			\item 
			${q}(\Icc)={q}({\Icc}_e)$;
			\item
			The chain $\Icc$ is quasi-saturated and $e\le \lambda(\Icc)-1$. 
		\end{enumerate}
		In particular, if $q(\Icc)= q(\Icc_e)$ with $e={w(\Icc)-1}$, then 
		$\Icc$ is quasi-saturated and $\lambda$-maximal.
	\end{lem}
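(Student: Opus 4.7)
Write $p = \maxsupp(I_r)$. The plan is to translate (i) into an ideal identity using Lemma~\ref{prop_colon_filtration}(ii), decompose the resulting colon ideal according to the elementary shifts in $\Inc_{r,r+1}$, and then match the ``extra'' generators that appear to the two clauses of (ii). By Lemma~\ref{prop_colon_filtration}(ii), (i) is equivalent to $I_{e,r+1} = \langle I_r \rangle_{R_{r+1}}$; unwinding the definition of $I_{e,r+1}$, this reduces to the single identity $(I_{r+1} : x_{p+1}^e) \cap R_p = I_r$ of monomial ideals in $R_p$.

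To unpack the colon, write $\sigma_k$ ($k \in \{0, \ldots, r\}$) for the order-preserving injection fixing $\{1, \ldots, k\}$ and shifting the rest up by one, so $I_{r+1}$ is generated (not minimally) by the $\sigma_k(v)$ with $v \in G(I_r)$. For $v \in G(I_r)$ with $s := \maxsupp(v)$, a case analysis shows: if $k \ge s$, $\sigma_k(v) = v$; if $k < s$ with $s < p$, then $\sigma_k(v) \in R_p$ has no $x_{p+1}$-factor; and if $k < p = s$, the $x_{p+1}$-exponent of $\sigma_k(v)$ equals $\lambda(v)$. It follows that $(I_{r+1}:x_{p+1}^e)\cap R_p = I_r$ holds iff both of the following ``extra'' families lie in $I_r$: family (A) consisting of $\sigma_k(v)$ for $v \in G(I_r)$ with $\maxsupp(v) < p$ and $k < \maxsupp(v)$, and family (B) consisting of $\sigma_k(v)/x_{p+1}^{\lambda(v)}$ for $v \in G(I_r)$ with $\maxsupp(v) = p$, $\lambda(v) \le e$, and $k < p$.

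I will then match (A) $\subseteq I_r$ with quasi-saturation of $\Icc$ and (B) $\subseteq I_r$ (given (A)) with $e \le \lambda(\Icc) - 1$. The inclusion (A) $\subseteq I_r$ is visibly equivalent to $I_{r+1}\cap R_p = I_r$; the $n=r$ instance of quasi-saturation gives one direction, and the reverse is an $\Inc$-equivariant propagation using $I_n = \langle \Inc_{r,n}(I_r)\rangle_{R_n}$, factoring each shift landing in $R_{n-r+p}$ through $\Inc_{r,r+1}$-shifts with image in $R_p$. For (B), the pivotal observation is that taking $k = p-1$ yields $\sigma_{p-1}(v)/x_{p+1}^{\lambda(v)} = v/x_p^{\lambda(v)}$, a proper divisor of the minimal generator $v$, hence not in $I_r$. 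Thus (B) $\subseteq I_r$ is equivalent to $L := \min\{\lambda(v) : v \in G(I_r),\, \maxsupp(v)=p\} \ge e+1$, and the remaining task is to show that $L = \lambda(\Icc)$ under quasi-saturation.

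To prove $L = \lambda(\Icc)$: the inequality $L \ge \lambda(I_r)$ is immediate, and quasi-saturation forces $\overline{\Icc}$ to stabilize at an index $\le p$ (for $m \ge p$ any generator $\pi(v) \in \overline{I}_{m+1}$ with $v \in \overline{I}_p$ can be rewritten as $\sigma_k(\rho(v))$ with $\sigma_k \in \Inc_{m,m+1}$, because $|\supp(\pi(v))| \le p \le m$ leaves a free index in $[m+1]$), so Lemma~\ref{lem_lambda_saturated} yields $\lambda(I_r) = \lambda(\overline{I}_p) = \lambda(\overline{\Icc}) = \lambda(\Icc)$. For $L \le \lambda(\Icc)$, pick $v_0 \in G(I_r)$ with $\lambda(v_0) = \lambda(I_r)$ and let $s = \maxsupp(v_0)$; if $s < p$, shift $v_0$ by $\pi \in \Inc$ that fixes $\{1, \ldots, s-1\}$ and sends $s$ to $p$. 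Then $\pi(v_0) \in I \cap R_p = I_r$ has $\maxsupp = p$, and any $w \in G(I_r)$ dividing $\pi(v_0)$ with $\maxsupp(w) < p$ would divide $v_0/x_s^{\lambda(v_0)}$, contradicting minimality of $v_0$; hence $\maxsupp(w) = p$ and $L \le \lambda(w) \le \lambda(\pi(v_0)) = \lambda(\Icc)$. The ``in particular'' clause is immediate from the equivalence with $e = w(\Icc) - 1$: quasi-saturation and $w(\Icc) \le \lambda(\Icc)$ combine with $\lambda(\Icc) \le w(\Icc)$ from Lemma~\ref{lem_lambda} to give $\lambda$-maximality. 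The main obstacle is the equality $L = \lambda(\Icc)$: one must simultaneously exploit the minimality of generators and the saturation-like structure of a quasi-saturated chain to rule out divisors with smaller maxsupp; the propagation of quasi-saturation from level $r$ to all higher levels in Step (A) is essentially formal combinatorics on $\Inc$-shifts.
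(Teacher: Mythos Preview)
Your proof is correct and rests on the same core as the paper's—both reduce (i) to $I_{e,r+1}=\langle I_r\rangle_{R_{r+1}}$ via \Cref{prop_colon_filtration}(ii), and both exploit that shifting the last variable of a minimal generator to position $p+1$ and then coloning out $x_{p+1}^e$ produces a proper divisor not in $I_r$—but the organization differs: you decompose $(I_{r+1}:x_{p+1}^e)\cap R_p$ explicitly into $I_r$ plus two families (A) and (B) and match each to one clause of (ii), whereas the paper argues the two implications directly without naming these families. Two places where the paper is cleaner deserve mention. For propagating $I_{r+1}\cap R_p=I_r\cap R_p$ to full quasi-saturation, the paper uses $\ind(\Icc_e)=r+1$ from \Cref{prop_colon_filtration}(i): the identity $I_{e,r+1}=\langle I_r\rangle_{R_{r+1}}$ then automatically gives $I_{e,n+1}=\langle I_n\rangle_{R_{n+1}}$ for all $n\ge r$, and the sandwich $\langle I_n\rangle_{R_{n+1}}\subseteq\langle I_{n+1}\cap R_{n-r+p}\rangle_{R_{n+1}}\subseteq I_{e,n+1}$ finishes. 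Your ``factoring through $\Inc_{r,r+1}$-shifts'' is correct in spirit but, as written, needs an explicit induction on $n$ (factor a generator of $I_{n+1}\cap R_{n-r+p}$ as $\sigma_j(w)$ with $w\in I_n$, note $\maxsupp(w)\le n-1-r+p$ when $j<\maxsupp(w)$, and apply the inductive hypothesis). For the identification $L=\lambda(\Icc)$, the paper simply invokes that \Cref{lem_lambda_saturated} extends to quasi-saturated chains via \Cref{rem_quasi-saturated} and then produces a direct contradiction; your route through $\overline{\Icc}$ is a correct but longer way to reach the same conclusion (incidentally, $\ind(\overline{\Icc})\le p$ holds unconditionally, not just under quasi-saturation—what quasi-saturation actually buys you is $G(I_r)=G(\overline{I}_p)$).
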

	
	\begin{proof}
		(i)$\Rightarrow$(ii): Assume that $q(\Icc)= q(\Icc_e)$. Then 
		${I}_{e,r+1}=\langle I_r \rangle_{R_{r+1}}$ by \Cref{prop_colon_filtration}(ii). This yields
		${I}_{e,n+1}=\langle I_n \rangle_{R_{n+1}}$ for all $n\ge r$ since $\ind(\Icc_e)=r+1$. Note that 
		$\maxsupp(I_n)\le n-r+p$ for all $n\ge r$, 
		with $p=\maxsupp(I_r)$.
		Hence,
		$
		\langle I_n \rangle_{R_{n+1}} \subseteq 
		\langle I_{n+1} \cap R_{n-r+p}\rangle_{R_{n+1}}
		\subseteq \langle (I_{n+1}:x_{n-r+p+1}^e) \cap R_{n-r+p}\rangle_{R_{n+1}}
		={I}_{e,n+1}
		$
		for all $n\ge r$, and equalities must hold throughout. It follows that 
		\[
		I_{n}\cap R_{n-r+p}
		=\langle I_n \rangle_{R_{n+1}}\cap R_{n-r+p}
		=\langle I_{n+1} \cap R_{n-r+p}\rangle_{R_{n+1}}\cap R_{n-r+p}
		=I_{n+1}\cap R_{n-r+p}
		\]
		for all $n\ge r$, i.e. $\Icc$ is quasi-saturated.

		Next we show that $e\le \lambda(\Icc)-1$. If this were not the case, then by \Cref{lem_lambda_saturated} (see \Cref{rem_quasi-saturated}) there would exist a minimal generator $u=x_1^{a_1}\cdots x_k^{a_k}$ of $I_r$ with $1\le a_k\le e$. Note that $k\le p.$ Since $\Icc$ is quasi-saturated, one has
		$v=x_1^{a_1}\cdots x_{k-1}^{a_{k-1}}x_p^{a_k}\in I_r$, and hence $w=x_1^{a_1}\cdots x_{k-1}^{a_{k-1}}x_{p+1}^{a_k}\in I_{r+1}.$
		It follows that
		\[
		u/x_k^{a_k}
		=w/x_{p+1}^{a_{k}}
		\in \langle (I_{r+1}:x_{p+1}^{e}) \cap R_p \rangle_{R_{r+1}}
		={I}_{e,r+1}.
		\]
		On the other hand, $u/x_k^{a_k}\not\in \langle I_r \rangle_{R_{r+1}}$ since $u$ is a minimal generator of $I_r$. This, however, contradicts the fact that 
		${I}_{e,r+1}=\langle I_r \rangle_{R_{r+1}}$.

		(ii)$\Rightarrow$(i): By \Cref{prop_colon_filtration}(ii), it suffices to show that ${I}_{e,r+1}=\langle I_r \rangle_{R_{r+1}}$. Since the inclusion ``$\supseteq$'' is trivial, we only need to verify the reverse one. Let 
		$u\in G({I}_{e,r+1})$. 
		Then $u\in R_p$ and $ux_{p+1}^e\in I_{r+1}$ with $p=\maxsupp(I_r)$. The latter implies 
		$u\in I_{r+1}$ since $e\le \lambda(\Icc)-1=\lambda(I_{r+1})-1$, where the last equality is due to \Cref{lem_lambda_saturated}. Hence, $u\in I_{r+1}\cap R_{p}\subseteq I_r$ because $\Icc$ is quasi-saturated.
		
		Finally, the equivalence of (i) and (ii) together with \Cref{lem_lambda} yields the last statement.
	\end{proof}

	The next result verifies \Cref{conj_reg} for quasi-saturated chains that are $\lambda$-maximal. 
	\begin{lem}
		\label{lem_saturated_ultimate}
		Let $\Icc=(I_n)_{n\ge 1}$ be a nonzero $\Inc$-invariant chain of  proper monomial ideals. If $\Icc$ is quasi-saturated and $\lambda$-maximal, then there exists $D\in \Z$ such that
		\[
		\reg I_n= (w(\Icc)-1)n+D 
		\quad\text{for all }\ n\gg 0.
		\]
	\end{lem}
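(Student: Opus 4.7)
The plan is to verify the hypotheses of \Cref{lem_sublinearity}(ii) with slope $s = w(\Icc) - 1$; write $w = w(\Icc)$ throughout. I first reduce to the saturated case. By \Cref{rem_quasi-saturated}, a quasi-saturated chain satisfies $I_n = \langle \overline{I}_{n-r+p}\rangle_{R_n}$ for all $n \ge r$, where $p = \maxsupp(I_r)$ and $\overline{\Icc}$ is the saturated chain with the same limit. The added variables $x_{n-r+p+1},\dots,x_n$ form a regular sequence on $R_n/\langle \overline{I}_{n-r+p}\rangle_{R_n}$, so $\reg I_n = \reg \overline{I}_{n-r+p}$; moreover, $\overline{\Icc}$ has the same minimal generators as $\Icc$ and is therefore $\lambda$-maximal with the same $w$-invariant. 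Thus we may assume $\Icc$ is saturated.

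For the lower bound on the increment, pick $(i_0,\bsa_0)$ with $\beta_{i_0,\bsa_0}(I_n)\ne 0$ and $|\bsa_0|-i_0=\reg I_n$. Since $I_n$ is proper, $\bsa_0\ne 0$, so \Cref{lem_betti_num_saturated2}(ii) provides some $p \ge \lambda(\Icc)=w$ (this is the only place where $\lambda$-maximality enters) with $\beta_{i_0+1,(\bsa_0,p)}(I_{n+1})\ne 0$. Hence $\reg I_{n+1}\ge |\bsa_0|+p-i_0-1\ge \reg I_n+(w-1)$ for all $n\ge r$.

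The linear upper bound $\reg I_n \le (w-1)n + C$ for some constant $C$ would be proved by induction on $n$ via \Cref{lem_reg_modulo_variable} applied to $x_n$. The $\lambda$-maximality forces every generator $u = x_n^{e_n} v$ of $I_n$ to satisfy either $e_n = 0$ or $e_n \ge w$; combined with saturation ($I_n \cap R_{n-1} = I_{n-1}$), one checks directly that $\langle I_n : x_n^e, x_n \rangle = \langle I_{n-1} + S_e, x_n \rangle$, where $S_e = \langle v : u = x_n^{e_n} v \in G(I_n),\, w \le e_n \le e \rangle \subseteq R_{n-1}$. For $0 \le e \le w - 1$ we have $S_e = 0$, and so $\reg \langle I_n : x_n^e, x_n\rangle + e = \reg I_{n-1} + e \le \reg I_{n-1} + (w-1)$. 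For $w \le e \le \delta(I_r)$, the key technical step is to show that $\reg(I_{n-1} + S_e) + e \le \reg I_{n-1} + (w-1) + O(1)$ uniformly in $e$. Combining both ranges and iterating gives $\reg I_n \le (w-1)n + C$, and \Cref{lem_sublinearity}(ii) applied to $f(n) = \reg I_n$ and $s = w - 1$ delivers $\reg I_n = (w-1)n + D$ for some $D \in \Z$ and all $n \gg 0$. The main obstacle is the uniform estimate on $\reg(I_{n-1} + S_e)$ in the regime $e \ge w$: the ideal is enlarged by the shadow generators of degree at most $\delta(I_r) - w$, and one must control the resulting regularity without circular appeal to \Cref{thm_reg_asymp}, by exploiting the intrinsic $\Inc$-structure of the shadow collection in the $\lambda$-maximal setting.
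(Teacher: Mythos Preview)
Your lower-bound step is exactly what the paper does: using \Cref{rem_quasi-saturated} to make \Cref{lem_betti_num_saturated2}(ii) available, and then $\lambda$-maximality to force $p\ge w$, giving $\reg I_{n+1}\ge \reg I_n+(w-1)$. (The preliminary reduction to the saturated case is harmless but unnecessary: the paper applies \Cref{lem_betti_num_saturated2} directly to the quasi-saturated chain via \Cref{rem_quasi-saturated}.)

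The genuine gap is in your upper bound. You set up an induction through \Cref{lem_reg_modulo_variable} and then explicitly flag as ``the main obstacle'' the uniform estimate $\reg(I_{n-1}+S_e)+e\le \reg I_{n-1}+(w-1)+O(1)$ for $e\ge w$; you do not establish it, and indeed such a bound is not obvious without further machinery. But this whole detour is not needed: the linear upper bound $\reg I_n\le (w-1)n+D'$ is already a standing result in the literature, namely \cite[Corollary~4.8]{LNNR2}, valid for arbitrary $\Inc$-invariant chains of monomial ideals (no $\lambda$-maximality required). The paper simply quotes that result and then applies \Cref{lem_sublinearity}(ii). Replacing your incomplete inductive argument with this citation closes the proof.
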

	
	\begin{proof}
		We can apply \Cref{lem_betti_num_saturated2} because the chain $\Icc$ is quasi-saturated 
		(see \Cref{rem_quasi-saturated}).
		Since $w(\Icc)=\lambda(\Icc)$,  \Cref{lem_betti_num_saturated2}(ii) yields
		\[
		\reg I_{n+1}\ge \reg I_n+ w(\Icc)-1
		\quad\text{for all }\ n\gg 0.
		\]
		On the other hand, \cite[Corollary 4.8]{LNNR2} says that there exists $D'\in \Z$ such that
		\[
		\reg I_n\le  (w(\Icc)-1)n+D'
		\quad\text{for all }\ n\gg 0.
		\]
		The desired conclusion now follows from \Cref{lem_sublinearity}.
	\end{proof}

	We are now in position to prove \Cref{thm_reg_asymp}.
	
	\begin{proof}[Proof of \Cref{thm_reg_asymp}]
		Let $w=w(\Icc)$. By \cite[Corollary 4.8]{LNNR2}, it suffices to show that there exists a constant $c$ such that
		\[
		\reg I_n\ge (w-1)n +c
		\quad \text{for all }\ n\gg0.
		\]
		We proceed by induction on ${q}(\Icc)$. If ${q}(\Icc)=0$, then $I_n=R_n$ for all $n\ge \ind(\Icc)$, and there is nothing to prove. So assume that ${q}(\Icc)>0$. 
		Set $e_0=w-1$. Recall that $q({\Icc}_{e_0})\le q(\Icc)$ by \Cref{prop_colon_filtration}(ii). We distinguish two cases:
		
		\emph{Case 1}: $q({\Icc}_{e_0})<q(\Icc).$ As $e_0<w$, one has $w({\Icc}_{e_0})=w$ by \Cref{prop_colon_filtration}(iii). So by the induction hypothesis there exists a constant $c$ such that
		\[
		\reg  I_{e_0,n}\ge (w-1)n +c
		\quad \text{for all }\ n\gg0.
		\]
		Since $\reg I_n\ge \reg I_{e_0,n}$ for all $n\ge \ind(\Icc)+1$ by  \Cref{prop_colon_filtration}(iv),
		the desired conclusion follows.
		
		\emph{Case 2}: $q(\Icc)=q({\Icc}_{e_0})$. In this case,  $\Icc$ is quasi-saturated and $\lambda$-maximal by \Cref{lem_q_w}. Therefore, \Cref{lem_saturated_ultimate} allows us to conclude the proof.
	\end{proof}
	
	Using a previous result in \cite{LNNR2}, an immediate consequence of \Cref{thm_reg_asymp} is that \Cref{conj_reg} is true for $\Inc$-invariant chains of Artinian monomial ideals.
	
	\begin{cor}
		\label{cor_artin_mono}
		If $\Icc=(I_n)_{n\ge 1}$ is an $\Inc$-invariant chain of monomial ideals with $\dim(R_n/I_n)=0$ for all $n\gg 0$, then there exists an integer $D$ such that $\reg I_n=(w(\Icc)-1)n+D$ for all $n\gg0$.
	\end{cor}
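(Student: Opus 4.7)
The plan is to combine \Cref{thm_reg_asymp} with an eventual-linearity statement from \cite{LNNR2} specific to the Artinian setting. The natural input to invoke is a result that asserts, under the hypothesis $\dim(R_n/I_n) = 0$ for $n \gg 0$, that the function $n \mapsto \reg I_n$ is eventually of the form $sn + D$ for some integers $s \ge 0$ and $D$ — i.e.\ the Artinian case of \Cref{conj_reg} is known up to the determination of the slope. This is the obvious way to read the phrase ``using a previous result in \cite{LNNR2}''.

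Granting such a statement, the corollary reduces to identifying $s$. I would divide both sides of $\reg I_n = sn+D$ by $n$ and let $n \to \infty$, obtaining
\[
s \;=\; \lim_{n\to\infty} \frac{\reg I_n}{n} \;=\; w(\Icc)-1,
\]
where the second equality is precisely the content of \Cref{thm_reg_asymp}. Substituting back yields $\reg I_n = (w(\Icc)-1)n + D$ for all $n \gg 0$, as required.

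The only step that involves any real effort is locating the \cite{LNNR2} statement in the exact form needed and checking that it really does output an \emph{eventually linear} function on the nose (rather than, say, only a linear upper bound, or only eventual sublinearity of the successive differences). If the cited result provides only matching linear upper and lower bounds in the Artinian case, I would instead fall back on \Cref{lem_sublinearity}: combine the upper bound $\reg I_n \le (w(\Icc)-1)n + D'$ of \cite[Corollary~4.8]{LNNR2} (already used in the proof of \Cref{thm_reg_asymp}) with a monotonicity-of-differences statement for the Artinian case, using \Cref{lem_sublinearity}(i) with $s = w(\Icc)-1$ to promote the bounds to an equality $\reg I_n = (w(\Icc)-1)n + D$. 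Either way, once \Cref{thm_reg_asymp} is in hand, the corollary is a short bookkeeping argument with no genuine new obstacle.
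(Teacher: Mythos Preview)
Your proposal is correct and matches the paper's own argument essentially verbatim: the paper invokes \cite[Corollary 6.5]{LNNR2} to get that $\reg I_n$ is eventually a linear function, and then applies \Cref{thm_reg_asymp} to identify the slope as $w(\Icc)-1$. Your fallback via \Cref{lem_sublinearity} is unnecessary since the cited result already gives eventual linearity on the nose.
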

	\begin{proof}
		Since $\reg I_n$ is eventually a linear function by \cite[Corollary 6.5]{LNNR2}, the conclusion follows easily from \Cref{thm_reg_asymp}.
	\end{proof}
	
	\begin{rem}
		When $\reg I_n$ is a linear function (as in \Cref{lem_saturated_ultimate} and \Cref{cor_artin_mono}), it would be interesting to understand the constant $D$. In case $\Icc$ is a $\Sym$-invariant chain of monomial ideals with $r=\ind(\Icc)$ and $w=w(\Icc)$, Raicu \cite[Theorem 6.1]{Ra} (see also \cite[Remark 3.11]{Mu}) shows that 
		$D=\reg(I_r:(x_1\cdots x_r)^{w-1})$. 
		However, this is not true for $\Inc$-invariant chains, as illustrated by the following example. Consider the $\Inc$-invariant chain $\Icc=(I_n)_{n\ge 1}$ with 
		$I_r=\langle x_1x_r^w,x_r^s\rangle$ and $\ind(\Icc)=r$, where $r\ge 2$ and $s\ge w+1$.  Then $w(\Icc)=\lambda(\Icc)=w$, and $I_r:(x_1\cdots x_r)^{w-1}=\langle x_r\rangle$ has regularity 1. On the other hand, one can show that
		\[
		\reg I_n = (w-1)n+1+(r-1)(s-2w+1) \quad \text{for all \ $n\ge 2r-2$}.
		\]
		Thus, depending on $w$ and $s$, the constant 
		$D=1+(r-1)(s-2w+1)$
		can be either smaller or bigger than 
		$\reg(I_r:(x_1\cdots x_r)^{w-1}).$
	\end{rem}
	
	In the remainder of this section we show that \Cref{conj_reg} is true for any chain of monomial ideals up to an appropriate saturation. To this end, let us first introduce the following construction.
	
	\begin{defn}
		Let $\Icc=(I_n)_{n\ge 1}$ be an $\Inc$-invariant chain of ideals and let $m$ be a positive integer.  The \emph{$m$-saturation} of $\Icc$ is the chain $\Icc^{[m]}=(J_n)_{n\ge 1}$ determined by $J_1=\langle0\rangle$ and
		$$J_n=\Big\langle\sum_{k=2}^n x_{k}^mI_{k-1}\Big\rangle_{R_n}\quad\text{for } n\ge 2.$$
	\end{defn}
	
	As shown below, $\Icc^{[m]}$ is a saturated chain, justifying the name.  Nevertheless, one should not confuse $\Icc^{[m]}$ with the saturated chain $\overline{\Icc}$ considered so far. 
	
	\begin{ex}
		Consider again the chain $\Icc$ in \Cref{ex_lambda}. Let 
		$\overline{\Icc}=(\bar I_n)_{n\ge 1}$ and $\Icc^{[m]}=(J_n)_{n\ge 1}$ be the saturated chain and $m$-saturation of $\Icc$, respectively. Recall that 
		${I}_n=\langle x_i^2\mid 1\le i\le n\rangle$ for $n\ge 4$, hence 
		$I=\bigcup_{n\ge1}I_n=\langle x_i^2\mid  i\ge 1\rangle\subset R$. 
		This gives
		$
		\bar I_n=I\cap R_n=\langle x_i^2\mid 1\le i\le n\rangle
		\text{ for all } n\ge 1.
		$
		On the other hand,
		\begin{align*}
			J_4&=\langle x_4^mI_3\rangle_{R_4}=\langle x_1^2x_4^m,\; x_2^2x_3x_4^m,\; x_3^2x_4^m\rangle_{R_4},\\
			J_5&=\langle x_4^mI_3+x_5^mI_4\rangle_{R_5}=\langle x_1^2x_4^m,\; x_2^2x_3x_4^m,\; x_3^2x_4^m\rangle_{R_5}+\langle x_i^2x_5^m\mid 1\le i\le 4\rangle_{R_5},\text{ etc}.
		\end{align*}
	\end{ex}
	
	The next result gathers some properties of $m$-saturation. Its proof will be given in the Appendix.
	
	\begin{lem}
		\label{lem_saturated_new}
		Let $\Icc=(I_n)_{n\ge 1}$ be an $\Inc$-invariant chain of ideals. For $m\ge1$ let $\Icc^{[m]}=(J_n)_{n\ge 1}$ be the $m$-saturation of $\Icc$. Then
		$\Icc^{[m]}$ is a saturated $\Inc$-invariant chain. 
		If, moreover, $\Icc$ is a chain of monomial ideals, then the following hold.
		\begin{enumerate}
			\item
			$\lambda(\Icc^{[m]})=m$ and $w(\Icc^{[m]})=\max\{w(\Icc),m\}$. 
			\item
			For $n\ge 2$, $i\ge0$ and $\bsa =(a_1,\dots,a_{n-1})\in \Z^{n-1}_{\ge0}$ one has
			\[
			\beta_{i,(\bsa,m)}(J_{n})
			=\beta_{i,\bsa}(I_{n-1})+ \beta_{i-1,\bsa}(J_{n-1}),\quad
			\text{where }\ 
			(\bsa,m)=(a_1,\dots,a_{n-1},m).
			\]		
			\item
			$\reg J_{n}=\max\{\reg I_{n-1}+m,\; \reg J_{n-1}+m-1\}$\ for all $n\ge 2$.
		\end{enumerate}
	\end{lem}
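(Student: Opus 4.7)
The plan is to handle the four assertions in order, with (ii) as the technical crux. For saturation and $\Inc$-invariance of $\Icc^{[m]}$ (valid for general ideals), I would check $\Inc$-invariance directly: for any $\pi\in\Inc_{n,n+1}$ and summand $x_k^m I_{k-1}$ of $J_n$, the image $x_{\pi(k)}^m \pi(I_{k-1})$ lies in $x_{\pi(k)}^m I_{\pi(k)-1}\subseteq J_{n+1}$, since $\pi|_{[k-1]}\in\Inc_{k-1,\pi(k)-1}$ and $\Icc$ is $\Inc$-invariant. For saturation, grading $R_{n+1}=R_n[x_{n+1}]$ by $x_{n+1}$, any element of $J_{n+1}$ of $x_{n+1}$-degree zero must come from summands with $k\le n$ (the summand $x_{n+1}^m I_n R_{n+1}$ sits in $x_{n+1}$-degrees $\ge m\ge 1$), giving $J_{n+1}\cap R_n=J_n$.

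For (i), every minimal monomial generator of $J_n$ has the form $x_k^m u$ with $u\in G(I_{k-1})$ and $2\le k\le n$ (since these products span $J_n$), so $\maxsupp(x_k^m u)=k$ and its $x_k$-exponent is $m$, yielding $\lambda(J_n)=m$ for all $n\ge 2$ and thus $\lambda(\Icc^{[m]})=m$. For $w$, using $w(x_k^m u)=\max\{m,w(u)\}$, the fact that any such $u\in I_n$ is divisible by some generator of $I_n$ with $w$-value $\ge w(\Icc)$ (by \Cref{lem_lambda}, for $n\ge\ind(\Icc)$) gives $w(J_n)\ge\max\{m,w(\Icc)\}$. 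The matching upper bound comes from picking $u_0\in G(I_{n-1})$ with $w(u_0)=w(\Icc)$ and considering any minimal generator of $J_n$ dividing $x_n^m u_0$.

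The heart of the proof is (ii). From $J_n=J_{n-1} R_n+x_n^m I_{n-1} R_n$ and the intersection identity $J_{n-1} R_n\cap x_n^m I_{n-1} R_n=x_n^m J_{n-1} R_n$ (verified via the $x_n$-grading), one obtains the short exact sequence $0\to x_n^m J_{n-1} R_n\to J_{n-1} R_n\oplus x_n^m I_{n-1} R_n\to J_n\to 0$. Applying $\Tor^{R_n}(-,K)$ and using that for an $R_{n-1}$-module $M$, $\Tor^{R_n}(M R_n, K)$ (respectively $\Tor^{R_n}(x_n^m M R_n, K)$) equals $\Tor^{R_{n-1}}(M, K)$ concentrated in $x_n$-degree $0$ (respectively $m$), the long exact sequence restricted to multidegree $(\bsa,m)$ reduces to one whose consecutive connecting maps are all induced by the inclusion $\iota\colon J_{n-1}\hookrightarrow I_{n-1}$. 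The identity in (ii) is thus equivalent to $\iota_*=0$ on multigraded Tor in degree $\bsa$, which via \Cref{lem_multigradedBetti_num} translates to vanishing of the inclusion-induced map $\widetilde{H}_{i-1}(\Delta^{J_{n-1}}_\bsa)\to\widetilde{H}_{i-1}(\Delta^{I_{n-1}}_\bsa)$. The decisive step is to let $k^*=\max(\supp(x^\bsa)\cap[n-1])$ and show that the cone $\Sigma:=\Delta^{J_{n-1}}_\bsa *\{k^*\}$ is a subcomplex of $\Delta^{I_{n-1}}_\bsa$: for any $F\in\Delta^{J_{n-1}}_\bsa$ with witness $x_l^m u\mid x^\bsa/x^F$ (where $u\in G(I_{l-1})$), we have $a_l\ge m\ge 1$, so $l\in\supp(x^\bsa)\cap[n-1]$ and hence $l\le k^*$; since $u\in R_{l-1}$ is coprime to $x_{k^*}$, divisibility $u\mid x^\bsa/x^F$ yields $u\mid (x^\bsa/x^F)/x_{k^*}$, placing $F\cup\{k^*\}$ in $\Delta^{I_{n-1}}_\bsa$. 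Since $\Sigma$ is contractible, \Cref{lem_induced_zeromap} gives $\iota_*=0$.

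For (iii), note that $\Lcm(J_n)$ is supported in $x_n$-degrees $\{0,m\}$, so $\beta_{i,\bsb}(J_n)=0$ unless the last coordinate of $\bsb$ lies in $\{0,m\}$; saturation yields $\beta_{i,(\bsa,0)}(J_n)=\beta_{i,\bsa}(J_{n-1})$, while (ii) handles the case of last coordinate $m$. Maximizing $|\bsb|-i$ over nonzero Betti numbers produces $\max\{\reg J_{n-1},\,\reg I_{n-1}+m,\,\reg J_{n-1}+m-1\}$, which collapses to $\max\{\reg I_{n-1}+m,\,\reg J_{n-1}+m-1\}$ for $m\ge 1$. The main obstacle is the vanishing of $\iota_*$ in (ii); the key insight is the uniform choice of apex $k^*=\max(\supp(x^\bsa)\cap[n-1])$, which works for all faces of $\Delta^{J_{n-1}}_\bsa$ precisely because every witness index $l$ lies in $\supp(x^\bsa)\cap[n-1]$.
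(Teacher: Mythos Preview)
Your proof is correct, and the core topological insight---that the cone $\Gamma^{[t]}$ over $\Delta^{J_{n-1}}_{\bsa}$ with apex $t=\max\supp(x^{\bsa})$ sits inside $\Delta^{I_{n-1}}_{\bsa}$---is exactly what the paper uses as well. The packaging, however, differs in two places worth noting. For (ii), the paper works entirely on the simplicial side: it identifies $\Delta^{J_n}_{(\bsa,m)}=\Delta\cup\Gamma^{[n]}$ directly (your complex $\Delta$ and $\Gamma$ agree with theirs) and then applies Mayer--Vietoris, using the cone inclusion $\Gamma^{[t]}\subseteq\Delta$ together with \Cref{lem_induced_zeromap} to kill the connecting maps. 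You instead set up the module-level short exact sequence $0\to x_n^m J_{n-1}R_n\to J_{n-1}R_n\oplus x_n^m I_{n-1}R_n\to J_n\to 0$, pass to the Tor long exact sequence, and then translate the vanishing of $\iota_*$ back to the same cone argument; this makes the role of the inclusion $J_{n-1}\hookrightarrow I_{n-1}$ more transparent and is arguably cleaner conceptually, at the cost of invoking the functoriality of the Hochster-type isomorphism. For (iii), the paper obtains only the inequality $\reg J_n\ge\max\{\reg I_{n-1}+m,\reg J_{n-1}+m-1\}$ from (ii) and then appeals to \Cref{lem_reg_modulo_variable} for the reverse direction; your observation that $\Lcm(J_n)$ is supported in $x_n$-degrees $\{0,m\}$, combined with the identification $\beta_{i,(\bsa,0)}(J_n)=\beta_{i,\bsa}(J_{n-1})$ from saturation, lets you read off both bounds directly from (ii), avoiding \Cref{lem_reg_modulo_variable} entirely.
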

	
	Let us now show that \Cref{conj_reg} is true for the $m$-saturation of an arbitrary $\Inc$-invariant chain of monomial ideals, given that $m$ is large enough.
	
	\begin{cor}
		\label{cor_saturation}
		Let $\Icc$ be a nonzero $\Inc$-invariant chain of proper monomial ideals. For $m\ge 1$ let $\Icc^{[m]}=(J_n)_{n\ge 1}$ be the $m$-saturation of $\Icc$. Then for $m\ge w(\Icc)$ there exists an integer $D$ such that
		\[
		\reg J_n=(m-1)n+D
		\quad\text{for all }\ n\gg0.
		\]
	\end{cor}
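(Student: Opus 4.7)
The plan is to apply \Cref{lem_sublinearity}(ii) to the sequence $\reg J_n$ with the candidate slope $s = m-1$, exploiting the fact that the $m$-saturation construction delivers both the required lower-bound recurrence and a matching upper bound essentially for free.

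For the lower-bound half of \Cref{lem_sublinearity}(ii), I would read off from \Cref{lem_saturated_new}(iii) the inequality
\[
\reg J_n \;\ge\; \reg J_{n-1} + (m-1) \qquad \text{for all } n \ge 2,
\]
which is immediate since the maximum in that recurrence is bounded below by its second argument. For the upper bound, I would invoke the same estimate used in the proof of \Cref{thm_reg_asymp}, namely \cite[Corollary 4.8]{LNNR2}, applied to the chain $\Icc^{[m]}$. By \Cref{lem_saturated_new}(i) and the hypothesis $m \ge w(\Icc)$, we have $w(\Icc^{[m]}) = \max\{w(\Icc),m\} = m$, so that corollary produces some constant $D'$ with $\reg J_n \le (m-1)n + D'$ for all $n \gg 0$. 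Feeding these two inequalities into \Cref{lem_sublinearity}(ii) then yields an integer $D$ with $\reg J_n = (m-1)n + D$ for all $n \gg 0$, as desired.

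I do not expect a serious obstacle, since the technical content has been absorbed into \Cref{lem_saturated_new} (whose proof is deferred to the Appendix). The only conceptual point worth flagging is the role of the hypothesis $m \ge w(\Icc)$: it is precisely what forces $w(\Icc^{[m]}) = m$, thereby aligning the slope of the upper bound coming from \cite[Corollary 4.8]{LNNR2} with the slope $m-1$ built into the lower-bound recurrence. If instead $m < w(\Icc)$, then $w(\Icc^{[m]}) = w(\Icc) > m$ and the available upper bound would carry the strictly larger slope $w(\Icc) - 1$, so the sublinearity criterion would no longer apply and a different argument would be needed.
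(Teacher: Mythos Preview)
Your proof is correct, but it takes a slightly different route from the paper's. The paper argues that when $m\ge w(\Icc)$, \Cref{lem_saturated_new} makes $\Icc^{[m]}$ saturated (hence quasi-saturated) and $\lambda$-maximal, and then simply invokes \Cref{lem_saturated_ultimate}. Unpacking that lemma, the lower-bound recurrence there comes from the Betti-number transfer in \Cref{lem_betti_num_saturated2}(ii), which for a $\lambda$-maximal chain gives $\reg I_{n+1}\ge \reg I_n + \lambda(\Icc)-1 = w(\Icc)-1$. You instead read the lower-bound recurrence $\reg J_n\ge \reg J_{n-1}+(m-1)$ directly off the explicit formula in \Cref{lem_saturated_new}(iii), bypassing \Cref{lem_saturated_ultimate} and \Cref{lem_betti_num_saturated2} entirely. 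Both routes finish the same way, combining the recurrence with the upper bound from \cite[Corollary~4.8]{LNNR2} (using $w(\Icc^{[m]})=m$) via \Cref{lem_sublinearity}(ii). Your argument is a bit more self-contained for this particular corollary; the paper's argument has the virtue of exhibiting the result as an instance of the general framework for quasi-saturated $\lambda$-maximal chains.
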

	
	\begin{proof}
		If $m\ge w(\Icc)$, then the chain $\Icc^{[m]}$ is saturated and $\lambda$-maximal by \Cref{lem_saturated_new}. So the result follows from \Cref{lem_saturated_ultimate}.
	\end{proof}
	
	We conclude this section with the following remark, showing in particular that the verification of \Cref{conj_reg} can be reduced to the case of saturated chains if $w(\Icc)\ge 2$.
	
	\begin{rem}
		Let $\Icc$ be a nonzero $\Inc$-invariant chain of proper monomial ideals with $w(\Icc)\ge 2$. In order to verify \Cref{conj_reg} for $\Icc$, it is enough to show that this conjecture is true for an $m$-saturation $\Icc^{[m]}$ of $\Icc$ for some $1\le m\le w(\Icc)-1$. Indeed, suppose that \Cref{conj_reg} holds for such an $m$-saturation. Then from \Cref{lem_saturated_new}(i) we get $w(\Icc^{[m]})= w(\Icc)$, hence 
		$$
		\reg J_n=\reg J_{n-1}+w(\Icc)-1> \reg J_{n-1}+m-1
		\quad\text{for all }\ n\gg0.
		$$
		So \Cref{lem_saturated_new}(iii) yields $\reg J_n=\reg I_{n-1}+m$ for all $n\gg0$, and therefore $\reg I_{n}=\reg J_n-m$ must be eventually a linear function.
	\end{rem}
	
	
	\section{Open problems}
	\label{sec_problem}
	In this section we briefly discuss two open problems that are related to \Cref{conj}.
	
	As mentioned in the introduction, \Cref{conj} is verified for $\Sym$-invariant chains of monomial ideals by Murai \cite{Mu} and Raicu \cite{Ra}. This result has been covered in their joint work on equivariant Hochster's formula \cite{MR20}, where they give a combinatorial description of the $\Sym(n)$-module structure of $\Tor_i(I_n,K)$ when $I_n\subseteq R_n$ is a $\Sym(n)$-invariant monomial ideal. It would be interesting to extend such an equivariant Hochster's formula to $\Inc$-invariant chains.
	
	\begin{problem}
		Let $\Icc=(I_n)_{n\ge 1}$ be an $\Inc$-invariant chain of monomial ideals. Give an equivariant description of $\Tor_i(I_n,K)$ for $n\gg0.$
	\end{problem}
	
	A possible approach to \Cref{conj} is to study primary decompositions of the ideals $I_n$. Experiments with Macaulay2 \cite{GS} lead to the following.
	
	\begin{conj}
		Let $\Icc=(I_n)_{n\ge 1}$ be an $\Inc$-invariant chain of monomial ideals. Then the number of $\Inc$-orbits of associated primes of $I_n$ is eventually a quasipolynomial function in $n$.
	\end{conj}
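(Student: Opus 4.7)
The plan is to adapt the induction-on-$q$-invariant framework used for regularity in \Cref{sec_reg}. For a monomial ideal $J\subseteq R_n$, every associated prime has the form $P_F=\langle x_i : i\in F\rangle$ for some $F\subseteq[n]$, and $P_F\in\Ass(R_n/J)$ iff there exists a monomial $u\notin J$ with $(J:u)=P_F$. The first step is to catalogue $\Ass(R_n/I_n)$ as a finite set of such ``witness'' pairs $(u,F)$; then, using the stability relation $I_n=\langle\Inc_{r,n}(I_r)\rangle_{R_n}$ for $n\ge r=\ind(\Icc)$, every witness in $R_n$ can be traced back to a family of witnesses in $R_r$ transported along $\Inc_{r,n}$-maps. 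This partitions $\Ass(R_n/I_n)$ into pieces labelled by $\Inc$-orbit types of witnesses, and the conjecture translates into the statement that this partition has only boundedly many ``linear-growth'' classes once $n$ is large.

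The second step is the induction on $q(\Icc)$. Using the colon chain $\Icc_e=(I_{e,n})$ of \Cref{defn_colon_filtration} with $e=w(\Icc)-1$ and setting $p=\maxsupp(I_r)$, the short exact sequence
\[
 0\to R_n/(I_n:x_{n-r+p}^e) \xrightarrow{\cdot x_{n-r+p}^e} R_n/I_n \to R_n/\langle I_n,x_{n-r+p}^e\rangle \to 0,
\]
together with the standard inclusion $\Ass(R_n/I_n)\subseteq \Ass(R_n/(I_n:x_{n-r+p}^e))\cup \Ass(R_n/\langle I_n,x_{n-r+p}^e\rangle)$, should let one express $|\Ass(R_n/I_n)|$ in terms of $|\Ass(R_n/I_{e,n})|$ plus a boundary correction controlled by the variable $x_{n-r+p}$. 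When $q(\Icc_e)<q(\Icc)$, the induction hypothesis gives eventual linearity for the main term, and the boundary correction needs to be shown to stabilize. When $q(\Icc_e)=q(\Icc)$, \Cref{lem_q_w} identifies $\Icc$ as quasi-saturated and $\lambda$-maximal; this base case is attacked by combining the explicit description $I_n=\langle\bar I_{n-r+p}\rangle_{R_n}$ with \Cref{lem_betti_num_saturated2} (which applies by \Cref{rem_quasi-saturated}) to count associated primes directly from the orbit structure of minimal generators of $\bar I_{n-r+p}$.

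The main obstacle I anticipate is forcing the eventual growth to be genuinely linear rather than merely polynomial. The orbit-counting bookkeeping of Step~1 produces, for each $\Inc$-orbit type of witness of combinatorial ``shape'' $k$, a contribution that a priori scales like $\binom{n}{k}$. Proving eventual linearity therefore requires a rigidity principle: along an $\Inc$-invariant chain, every orbit of shape $k\ge 2$ must either be absent or fuse into a single linear-growth family as $n$ grows. Extracting this collapse phenomenon from the $\Inc$-structure alone is the crux, and the place where the paper's current toolkit does not obviously suffice. I would try to obtain it by combining the inductive recursion above with a finer analysis of how the minimal generators of $I_r$ constrain which socle monomials of $R_n/I_n$ can exist for $n\gg r$, so that Step~2 not only counts witnesses but rules out a priori the $\Inc$-orbits that would otherwise contribute higher-degree terms. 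Without such a combinatorial constraint the scheme delivers only a polynomial bound, and bridging the gap from polynomial to linear is where the decisive new input must come from.
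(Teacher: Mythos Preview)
The statement you are attempting to prove is not a theorem in the paper: it appears in \Cref{sec_problem} as an open conjecture, motivated by Macaulay2 experiments and by the related result of Draisma, Eggermont, and Farooq on $\Sym(n)$-orbits of minimal primes. There is therefore no proof in the paper to compare your proposal against.

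As for the proposal itself, it is a research outline rather than a proof, and you have correctly identified the gap. The two-step plan (orbit bookkeeping plus induction on the $q$-invariant via the colon chain $\Icc_e$) is a reasonable heuristic, but neither step is close to complete. In Step~2, the short exact sequence only gives a containment of associated primes, not an equality, so you cannot read off $|\Ass(R_n/I_n)|$ from $|\Ass(R_n/I_{e,n})|$ plus a correction without further control over which primes actually occur; moreover, the ``boundary correction'' term $\Ass(R_n/\langle I_n,x_{n-r+p}^e\rangle)$ is not obviously governed by a chain with smaller $q$-invariant, so the induction does not close. In the base case you invoke \Cref{lem_betti_num_saturated2}, but that lemma controls nonvanishing of individual Betti numbers and says nothing direct about associated primes. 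Most importantly, the ``rigidity principle'' you flag at the end---that orbit types of shape $k\ge 2$ must collapse to linear growth---is exactly the content of the conjecture, not a lemma available in the paper. Your proposal isolates where the difficulty lies, but does not supply the missing idea; the conjecture remains open.
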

	
	This conjecture is closely related to a recent work of Draisma, Eggermont, and Farooq \cite{DEF}. There it is shown that if $\Icc=(I_n)_{n\ge 1}$ is a $\Sym$-invariant chain of ideals (in more general polynomial rings), then the number of $\Sym(n)$-orbits of minimal primes of $I_n$ is eventually a quasipolynomial in $n$. 
	
	
	\section{Appendix}
	
	We provide here the proofs of 
	\Cref{prop_colon_filtration,lem_saturated_new}. 
	For $j\ge 0$ let 
	$\sigma_{j}:\N\to\N$ be the increasing map defined by
	\[
	\sigma_{j}(i)
	=
	\begin{cases}
		i&\text{if }\ 1\le i\le j,\\
		i+1&\text{if }\ i\ge j+1.
	\end{cases}
	\]
	Obviously, $\sigma_{j}\in\Inc_{n,n+1}$ for all $n\ge 1.$
	Moreover, for any $f\in R_n$ it holds that
	\begin{equation}
		\label{eq_orbit}
		\Inc_{n,n+1}(f)
		=\bigcup_{j\ge0}\sigma_{j}(f)
		=\bigcup_{j=0}^{n}\sigma_{j}(f).
	\end{equation}

	\begin{proof}[Proof of \Cref{prop_colon_filtration}] 
		Denote $p=\maxsupp(I_r).$
		
		(i) It is easily seen that the chain $\Icc_e$ is $\Inc$-invariant. Let us show that $\ind(\Icc_e) = r+1$. Since $I_{e,n}=0$ for $n\le r$ by definition, it suffices to check that 
		$I_{e,n+1}=\langle\Inc_{n,n+1}(I_{e,n})\rangle_{R_{n+1}}$ for $n\ge r+1.$ Take $u\in G(I_{e,n+1})$ with $n\ge r+1$. Then $ux_{n-r+p+1}^e \in I_{n+1}$, and so 
		$ux_{n-r+p+1}^e=\sigma_{j}(v)z$ for some $j\ge0$, $v\in G(I_n)$, and $z\in R_{n+1}$.  
		Note that $\maxsupp(v)\le\maxsupp(I_n)\le n-r+p$. By definition of $\sigma_{j}$, it is clear that 
		$\maxsupp(v)\le\maxsupp(\sigma_{j}(v))\le\maxsupp(v)+1.$ 
		Consider the following cases:
		
		\emph{Case 1}: $\maxsupp(v)\le n-r+p-1$. Then 
		$\maxsupp(\sigma_{j}(v))\le n-r+p$. From 
		$ux_{n-r+p+1}^e=\sigma_{j}(v)z$ we deduce that 
		$\sigma_{j}(v)$ divides $u$. Since $v\in I_n \cap R_{n-r+p-1} \subseteq I_{e,n}$, we get 
		$u\in \Inc_{n,n+1}(I_{e,n})$.
		
		\emph{Case 2}: $\maxsupp(v)=\maxsupp(\sigma_{j}(v))= n-r+p.$
		Then $\sigma_{j}(v)=v$, and $v$ divides $u$. Note that
		$I_n=\langle\Inc_{n-1,n}(I_{n-1})\rangle_{R_{n}}$ since $n\ge r+1$. Hence $v=\sigma_{k}(v')$ for some $k\ge0$ and $v'\in I_{n-1}$. Evidently, $I_{n-1}\subseteq I_{e,n}$. Thus $u\in \langle\Inc_{n,n+1}(I_{e,n})\rangle_{R_{n+1}}$, because $u$ is divisible by 
		$\sigma_{k}(v')\in\Inc_{n,n+1}(I_{e,n}).$

		\emph{Case 3}:  $\maxsupp(v)= n-r+p$ and 
		$\maxsupp(\sigma_{j}(v))= n-r+p+1$. 
		This means that $j<n-r+p$. 
		We can write $v=v'x_{n-r+p}^d$ with $d\ge 1$ and 
		$\maxsupp(v')\le n-r+p-1$. Then from
		\[
		ux_{n-r+p+1}^{e}=\sigma_{j}(v)z
		=\sigma_{j}(v')x_{n-r+p+1}^{d}z
		\]
		we deduce that $d\le e$ and $\sigma_{j}(v')$ divides $u$. Hence,
		$v' \in \langle(I_n:x_{n-r+p}^e) \cap R_{n-r+p-1}\rangle_{R_{n}}=I_{e,n}$ 
		and again
		$u\in \langle\Inc_{n,n+1}(I_{e,n})\rangle_{R_{n+1}}$, as desired.
		
		(ii) We first observe the following isomorphism of graded rings
		\[
		\frac{R_p}{I_r\cap R_p}\cong
		\frac{R_{r+1}}{\langle I_r, x_{p+1},\ldots,x_{r+1}\rangle}.
		\]
		Denote $p'=\maxsupp(I_{e,r+1})$. Since 
		$I_{e,r+1}=\langle(I_{r+1}:x_{p+1}^e) \cap R_{p}\rangle_{R_{r+1}}$, it is obvious that $p'\le p.$
		Hence
		\[
		\langle I_r, x_{p+1},\ldots,x_{r+1}\rangle
		\subseteq
		\langle I_{e,r+1}, x_{p'+1},\ldots,x_{r+1}\rangle.
		\]
		Combining this with the fact that
		$\delta(I_r)\ge \delta(I_{r+1}) \ge \delta(I_{e,r+1})$ 
		we get
		\[
		{q}(\Icc_e) 
		= \sum_{j=0}^{\delta(I_{e,r+1})}\dim_K 
		\Big(\frac{R_{r+1}}{\langle I_{e,r+1}, x_{p'+1},\ldots,x_{r+1}\rangle}\Big)_j 
		\le \sum_{j=0}^{\delta(I_r)}\dim_K 
		\Big(\frac{R_{r+1}}{\langle I_r, x_{p+1},\ldots,x_{r+1}\rangle}\Big)_j
		={q}(\Icc).
		\]
		One checks easily that the equality occurs if and only if
		$I_{e,r+1}=\langle I_r \rangle_{R_{r+1}}$. 
		
		(iii) Choose $u\in G(I_r)$ with $w(u)=w(I_r)$. Since $ I_r \subseteq I_{e,r+1}$, there exists $v\in G(I_{e,r+1})$ that divides $u$. It follows that
		\[
		w(\Icc)=w(I_r)=w(u)\ge w(v)\ge w(I_{e,r+1})= w(\Icc_e).
		\]
		Let us show that the equality holds if $0\le e \le w(\Icc)-1$. Indeed, take any monomial $\tilde{v}\in G(I_{e,r+1})$. It suffices to prove that $w(\tilde{v})\ge w(\Icc)$. Since $\tilde{v}x_{p+1}^e\in  I_{r+1}$, it is divisible by $\sigma_j(\tilde{u})$ for some $j\ge0$ and $\tilde{u}\in G(I_r)$. This implies
		\[
		\max\{w(\tilde{v}),e\}=w(\tilde{v}x_{p+1}^e)
		\ge w(\sigma_j(\tilde{u}))=w(\tilde{u})
		\ge w(I_r)=w(\Icc).
		\]
		Hence, $w(\tilde{v})\ge w(\Icc)$ because $e<w(\Icc).$
		
		(iv) Denote $L=\langle I_n:x_{n-r+p}^e,x_{n-r+p}\rangle_{R_n}$. Observe that $I_n:x_{n-r+p}^d=I_n:x_{n-r+p}^{d+1}$ for $d\gg0$. Indeed, this is true for any $d$ at least the maximal exponent of $x_{n-r+p}$ in an element of $G(I_n)$. So we can apply  \Cref{lem_reg_modulo_variable} to get
		$\reg L\le \reg I_n$. By definition of $I_{e,n}$, it is clear that $L=I_{e,n}+\langle x_{n-r+p}\rangle_{R_n}$ and $x_{n-r+p}$ is a non-zero-divisor on $R_n/I_{e,n}$. Hence, 
		$\reg I_{e,n}=\reg L\le \reg I_n$. 
	\end{proof}

	Next we prove \Cref{lem_saturated_new}.

	\begin{proof}[Proof of \Cref{lem_saturated_new}]
		We first show that $\Icc^{[m]}$ is an $\Inc$-invariant chain. Since $\Icc$ is $\Inc$-invariant, it suffices to verify the following equalities for all $n\ge 2$:
		\begin{align*}
			J_{n}
			&=x_{n}^m\langle I_{n-1}\rangle_{R_n}+\langle J_{n-1}\rangle_{R_n},\\
			\langle\Inc_{n-1, n}(J_{n-1})\rangle_{R_{n}}
			&=x_{n}^m\langle\Inc_{n-2, n-1}(I_{n-2})\rangle_{R_{n}}+\langle J_{n-1}\rangle_{R_{n}}.
		\end{align*}
		The former is immediate from definition. To prove the latter we compute the orbit $\Inc_{n-1, n}(x_k^mf)$ for $f\in R_{k-1}$ with $k\le n-1.$ Using \eqref{eq_orbit} one gets
		\begin{align*}
			\Inc_{n-1, n}(x_{k}^mf)
			&=\bigcup_{j=0}^{k-1}\sigma_j(x_{k}^mf)\cup \bigcup_{j=k}^{n-1}\sigma_j(x_{k}^mf)
			=x_{k+1}^m\Inc_{k-1, k}(f)\cup\{x_{k}^mf\}.
		\end{align*}
		It follows that
		\[
		\begin{aligned}
			\langle\Inc_{n-1,n}(J_{n-1})\rangle_{R_{n}}
			&=\sum_{k=2}^{n-1}\langle\Inc_{n-1,n}(x_{k}^mI_{k-1})\rangle_{R_{n}}
			=\sum_{k=2}^{n-1}\langle x_{k+1}^m\Inc_{k-1, k}(I_{k-1})+x_{k}^mI_{k-1}\rangle_{R_{n}}\\
			&=x_{n}^m\langle\Inc_{n-2, n-1}(I_{n-2})\rangle_{R_{n}}+\sum_{k=2}^{n-1}x_{k}^m\langle\Inc_{k-2, k-1}(I_{k-2})+I_{k-1}\rangle_{R_{n}}\\
			&=x_{n}^m\langle\Inc_{n-2, n-1}(I_{n-2})\rangle_{R_{n}}+\sum_{k=2}^{n-1}x_{k}^m\langle I_{k-1}\rangle_{R_{n}}\\
			&=x_{n}^m\langle\Inc_{n-2, n-1}(I_{n-2})\rangle_{R_{n}}+\langle J_{n-1}\rangle_{R_{n}},
		\end{aligned}
		\]
		where the second-to-last equality is due to the fact that the chain $\Icc$ is $\Inc$-invariant.
		
		To see that the chain $\Icc^{[m]}$ is saturated, one only needs to note that
		\[
		J_{n+1}\cap R_n
		=(x_{n+1}^m\langle I_{n}\rangle_{R_{n+1}}+\langle J_n\rangle_{R_{n+1}})\cap R_n
		=J_n
		\quad\text{for all }\ n\ge 1.
		\]
		
		From now on, assume that $\Icc$ is a chain of monomial ideals.
		
		(i) Let $n\ge 2$. It is clear from the definition that $u\in G(J_n)$ precisely when $u=x_k^mv$ for some $v\in G(I_{k-1})$ and $2\le k\le n$. This yields the claim since
		$\lambda(x_k^mv)=m$ and $w(x_k^mv)=\max\{m,w(v)\}$.
		
		(ii) We exclude the trivial case $\bsa=\mathbf{0}$ and suppose that $\bsa=(a_1,\dots,a_t,0,\dots,0)$ with $a_t\ne0$ for some $t\in[n-1]$. Denote 
		$\Delta=\Delta^{I_{n-1}}_\bsa$, $\Gamma=\Delta^{J_{n-1}}_\bsa$, and $\Gamma^{[k]}$ the cone over $\Gamma$ with apex $k$. Moreover for each $p\ge0$, consider as in the proof of \Cref{lem_betti_num_saturated2} the simplicial complex
		\[
		\Gamma^{(p)}
		=\Big\{F\subseteq[n-1]\mid \frac{x^\bsa}{x^F}x_n^p\in J_n\Big\}
		=\Big\{F\subseteq[n-1]\mid \frac{x^\bsa}{x^F}\in (J_n:x_n^p)\cap R_{n-1}\Big\}.
		\]
		We claim that:
		\begin{enumerate}[(a)]
			\item 
			$\Gamma^{[t]}\subseteq \Delta$,
			\item $\Gamma^{(m)}=\Delta$ and $\Gamma^{(m-1)}=\Gamma$,
			\item
			$\Delta^{J_{n}}_{(\bsa,m)}=\Gamma^{(m)}\cup\{\{n\}\cup F\mid F\in \Gamma^{(m-1)}\}=\Delta\cup \Gamma^{[n]}$.
		\end{enumerate}
		
		For (a): Evidently, $J_{n-1}\subseteq I_{n-1}$, hence $\Gamma\subseteq\Delta$. So  its suffices to show that $F\cup\{t\}\in \Delta$ for any 
		$F\in\Gamma$ with $t\not\in F$. Set
		$\bsa'=(a_1,\dots,a_{t-1},0,\dots,0)$. 
		As the chain $\Icc^{[m]}$ is saturated, one has
		\[
		\frac{x^{\bsa'}}{x^F}x_t^{a_t}
		=\frac{x^\bsa}{x^F}\in J_{n-1}\cap R_t=J_t.
		\]
		Since 
		$J_t:x_t^{a_t}
		=(x_{t}^m\langle I_{t-1}\rangle_{R_{t}}+\langle J_{t-1}\rangle_{R_{t}}):x_t^{a_t}\subseteq \langle I_{t-1}\rangle_{R_{t}}$,
		this yields
		\[
		\frac{x^{\bsa'}}{x^F}\in J_t:x_t^{a_t}
		\subseteq \langle I_{t-1}\rangle_{R_{t}}\subseteq I_{n-1}.
		\]
		It follows that
		\[
		\frac{x^\bsa}{x^{F\cup\{t\}}}
		=\frac{x^{\bsa'}}{x^F}x_t^{a_t-1} \in I_{n-1},
		\]
		and hence $F\cup\{t\}\in \Delta$.
		
		For (b): One has
		$(J_n:x_n^m)\cap R_{n-1}= I_{n-1}$,
		hence $\Gamma^{(m)}=\Delta$. On the other hand, $\Gamma^{(m-1)}=\Gamma$ since 
		\[
		(J_{n}:x_{n}^{m-1})\cap R_{n-1}
		=\big(x_{n}\langle I_{n-1}\rangle_{R_{n}}+\langle J_{n-1}\rangle_{R_{n}}\big)
		\cap R_{n-1}
		=J_{n-1}.
		\]
		
		For (c): Arguing as for Claim (c) in the proof of \Cref{lem_betti_num_saturated2}, we obtain
		\[
		\Delta^{J_{n}}_{(\bsa,m)}
		=\Gamma^{(m)}\cup\{\{n\}\cup F\mid F\in \Gamma^{(m-1)}\}
		=\Delta\cup \Gamma^{[n]}.
		\]

		Let us now verify the stated formula for Betti numbers. One has
		$\Delta\cap \Gamma^{[n]}=\Gamma$ since $n\not\in \Delta$. 
		So using (c) we obtain the Mayer-Vietoris exact sequence
		\[
		\cdots
		\to \widetilde{H}_{i-1}(\Gamma)
		\xrightarrow{\delta_{i-1}=(\psi_{i-1},\varphi_{i-1})} 
		\widetilde{H}_{i-1}(\Delta)\oplus\widetilde{H}_{i-1}(\Gamma^{[n]})
		\to\widetilde{H}_{i-1}(\Delta^{J_{n}}_{(\bsa,m)}) 
		\to
		\widetilde{H}_{i-2}(\Gamma) 
		\xrightarrow{\delta_{i-2}}  
		\cdots.
		\]
		Since  $\Gamma^{[n]}$ is a cone, $\varphi_k:\widetilde{H}_{k}(\Gamma)
		\to\widetilde{H}_{k}(\Gamma^{[n]})=0$ 
		is a zero map for all $k$. On the other hand, it follows from (a) and \Cref{lem_induced_zeromap} that
		$\psi_k:\widetilde{H}_{k}(\Gamma)\to\widetilde{H}_{k}(\Delta)$ is also a zero map for all $k$.
		Therefore, 
		$\delta_k$ is a zero map for all $k$. The above exact sequence thus induces the following short exact sequence
		\[
		0\to\widetilde{H}_{i-1}(\Delta)
		\to\widetilde{H}_{i-1}(\Delta^{J_{n}}_{(\bsa,m)}) 
		\to \widetilde{H}_{i-2}(\Gamma)
		\to 0,
		\]
		which provides
		\[
		\beta_{i,(\bsa,m)}(J_{n})=
		\dim_K \widetilde{H}_{i-1}(\Delta^{J_{n}}_{(\bsa,m)})
		=\dim_K \widetilde{H}_{i-1}(\Delta)+\dim_K \widetilde{H}_{i-2}(\Gamma)
		=\beta_{i,\bsa}(I_{n-1})+ \beta_{i-1,\bsa}(J_{n-1}).
		\]
		
		(iii) 
		From (ii) it follows that 
		$\reg J_{n}\ge\max\{\reg I_{n-1}+m,\; \reg J_{n-1}+m-1\}.$
		On the other hand, using \Cref{lem_reg_modulo_variable} one obtains
		\[
		\reg J_{n}\in
		\{\reg\langle J_n:x_n^e,x_n\rangle+e\mid 0\le e\le m\}
		=\{\reg I_{n-1}+m,\; \reg J_{n-1}+e\mid 0\le e\le m-1\}.
		\]
		Therefore, it must hold that 
		$\reg J_{n}=\max\{\reg I_{n-1}+m,\; \reg J_{n-1}+m-1\}.$ 
	\end{proof}


	\section*{Acknowledgments}
	
	We would like to thank the referee for useful suggestions.
	This work is partially supported by the Simons Foundation Targeted Grant for the Institute of
	Mathematics - VAST (Award number: 558672), and by the Vietnam Academy of Science and Technology (grants CSCL01.01/22-23 and NCXS02.01/22-23). Parts of this work were carried out during a stay
	of the second author at the Vietnam Institute for Advanced Study in Mathematics (VIASM). He would like to thank VIASM for its hospitality and generous support. He also thanks the participants of the MFO-RIMS Tandem Workshop on ``Symmetries on Polynomial Ideals and Varieties'' for inspiring  discussions related to this work.


\begin{thebibliography}{9999}
		
		\bibitem{AH07}
		M. Aschenbrenner and C.J. Hillar,
		\emph{Finite generation of symmetric ideals}.
		Trans. Amer. Math. Soc. {\bf 359} (2007), no. 11, 5171--5192.
		
		\bibitem{CH+}
		G. Caviglia, H.T. H\`a, J. Herzog, M. Kummini, N. Terai and N.V. Trung,
		\emph{Depth and regularity modulo a principal ideal}.
		J. Algebr. Comb. {\bf 49} (2019), 1--20.
		
		\bibitem{CEF15}
		T.~Church, J.S.~Ellenberg, and B.~Farb,
		\emph{FI-modules and stability for representations of symmetric groups}.
		Duke Math. J.  {\bf 164} (2015), no. 9, 1833--1910.
		
		\bibitem{Co67}
		D.E. Cohen,
		\emph{On the laws of a metabelian variety}.
		J. Algebra {\bf 5} (1967), 267--273.
		
		\bibitem{Co87}
		D.E. Cohen,
		\emph{Closure relations, Buchberger's algorithm, and polynomials in infinitely many variables}.
		In: \emph{Computation theory and logic}, 78--87, Lecture Notes in Comput. Sci., {\bf 270}, Springer, Berlin, 1987.
		
		
		
		\bibitem{Dr14}
		J. Draisma,
		\emph{Noetherianity up to symmetry}.
		In: {Combinatorial algebraic geometry},
		Lecture Notes in Mathematics {\bf 2108}, pp. 33--61, Springer, 2014.
		
		\bibitem{DEF}
		J. Draisma,  R.H. Eggermont, and Azhar Farooq,
		\emph{Components of symmetric wide-matrix varieties}.
		J. Reine Angew. Math. {\bf 793} (2022), 143--184.
		
		\bibitem{GS}
		D. Grayson and M. Stillman,
		\emph{Macaulay2, a software system for research in algebraic geometry}.
		Available at \href{http://www.math.uiuc.edu/Macaulay2/}{http://www.math.uiuc.edu/Macaulay2/}.
		
		\bibitem{HKL}
		C.J. Hillar, R. Krone, and A. Leykin,
		\emph{Equivariant Gr\"{o}bner bases}.
		In: \emph{The 50th anniversary of Gr\"{o}bner bases}, 129--154,
		Adv. Stud. Pure Math. {\bf 77}, Math. Soc. Japan, Tokyo, 2018.
		
		\bibitem{HS12}
		C.J. Hillar and S. Sullivant,
		\emph{Finite Gr\"{o}bner bases in infinite dimensional polynomial rings and applications}.
		Adv. Math. {\bf 229} (2012), no. 1, 1--25.
		
		
		\bibitem{KLR}
		T. Kahle, D.V. Le, and T. R\"omer,
		\emph{Invariant chains in algebra and discrete geometry}.
		SIAM J. Discrete Math. {\bf 36} (2022), no. 2, 975--999. 
		
		
		\bibitem{KLS}
		R. Krone, A. Leykin, and A. Snowden,
		\emph{Hilbert series of symmetric ideals in infinite polynomial rings via formal languages}.
		J. Algebra {\bf 485} (2017), 353--362.
		
		
		\bibitem{LNNR}
		D.V. Le, U. Nagel, H.D. Nguyen, and T. R\"omer,
		\emph{Codimension and projective dimension up to symmetry}.
		Math. Nachr. {\bf 293} (2020), 346--362.
		
		\bibitem{LNNR2}
		D.V. Le, U. Nagel, H.D. Nguyen, and T. R\"{o}mer,
		\emph{Castelnuovo-Mumford regularity up to symmetry}.
		Int. Math. Res. Not. Vol. {\bf  2021}, no. 14, 11010--11049.
		
		\bibitem{LR20}
		D.V. Le and T. R\"{o}mer,
		\emph{A Kruskal-Katona type theorem and applications}.
		Discrete Math. {\bf 343} (2020), no. 5.
		
		\bibitem{LR21}
		D.V. Le and T. R\"{o}mer,
		\emph{Theorems of Carath\'{e}odory, Minkowski-Weyl, and Gordan
			up to symmetry}.
		SIAM J. Appl. Algebra Geom. \textbf{7} (2023), no. 1, 291--310.
		
		\bibitem{MS}
		E. Miller and B. Sturmfels, 
		\emph{Combinatorial Commutative Algebra}. 
		Graduate Texts
		in Mathematics, vol. {\bf 227}, Springer-Verlag, New York (2005).
		
		
		\bibitem{Mu}
		S. Murai,
		\emph{Betti tables of monomial ideals fixed by permutations of the variables}.
		Trans. Amer. Math. Soc. {\bf 373} (2020), 7087--7107.
		
		\bibitem{MR20}
		S. Murai and C. Raicu,
		\emph{An equivariant Hochster's formula for $\mathfrak{S}_n$-invariant monomial ideals}.
		J. London Math. Soc. {\bf 105} (2022), 1974--2010.
		
		
		\bibitem{Na}
		U. Nagel,
		\emph{Rationality of equivariant Hilbert series and asymptotic properties}.
		Trans. Amer. Math. Soc.  {\bf 374} (2021), 7313--7357.
		
		
		\bibitem{NR17}
		U. Nagel and T. R\"omer,
		\emph{Equivariant Hilbert series in non-noetherian polynomial rings}.
		J. Algebra {\bf 486} (2017), 204--245.
		
		\bibitem{NR19}
		U. Nagel and T. R\"{o}mer,
		\emph{{\rm FI}- and {\rm OI}-modules with varying coefficients}.
		J. Algebra {\bf 535} (2019), 286--322.
		
		\bibitem{NS}
		R. Nagpal and A.~Snowden,
		\emph{Symmetric subvarieties of infinite affine space}.
		Preprint, 2020, available at \href{https://arxiv.org/abs/2011.09009}
		{arXiv:2011.09009}.
		
		
		\bibitem{Pe}
		I. Peeva,
		\emph{Graded Syzygies}.
		Algebra and Applications. Volume {\bf 14}, Springer, London (2011).
		
		\bibitem{Ra}
		C. Raicu, 
		\emph{Regularity of $\mathfrak{S}_n$-invariant monomial ideals}.
		J. Combin. Theory Ser. A {\bf 177}, 2021.
		
		\bibitem{SS16}
		S.~Sam and A.~Snowden,
		\emph{$GL$-equivariant modules over polynomial rings in infinitely
			many variables}.
		Trans. Amer. Math. Soc. {\bf 368} (2016), no.~2, 1097--1158.
		
		\bibitem{SaSn17}
		S.~Sam and A.~Snowden,
		\emph{Gr\"obner methods for representations of combinatorial categories}.
		J. Amer. Math. Soc. {\bf 30} (2017), no. 1, 159--203.
		
		
	\end{thebibliography}
\end{document}